\DeclareMathOperator{\Expectation}{E} 
\newcommand{\absoluteval}[1]{\left\vert#1\right\vert}
\newcommand{\derivby}[1]{\frac{d}{d#1}}
\newcommand{\escortof}[1]{\operatorname{escort}\left(#1\right)}
\newcommand{\expectat}[2]{{\Expectation}_{#1}\left[#2\right]}
\newcommand{\normat}[2]{\left\Vert#2\right\Vert_{#1}}
\newcommand{\partiald}[2]{\frac{\partial}{\partial #1} #2}
\newcommand{\reals}{\mathbb{R}}
\newcommand{\set}[1]{\left\{#1\right\}}
\begin{document}

\title*{A class of non-parametric deformed exponential statistical models}
\titlerunning{Deformed exponential models}
\author{Montrucchio, Luigi and Pistone, Giovanni}
\authorrunning{L. Montrucchio and G. Pistone}
\institute{Luigi Montrucchio \at Collegio Carlo Alberto, Piazza Vincenzo Arbarello 8, 10122 Torino, Italy, \email{luigi.montrucchio@unito.it}
\and Giovanni Pistone \at de Castro Statistics, Collegio Carlo Alberto, Piazza Vincenzo Arbarello 8, 10122 Torino, Italy, \email{giovanni.pistone@carloalberto.org}}
\maketitle
\smartqed
%\abstract*{??}

\abstract{We study the class on non-parametric deformed statistical models where the deformed exponential has linear growth at infinity and is sub-exponential at zero. This class generalizes the class introduced by N.J.~Newton. We discuss the convexity and regularity of the normalization operator, the form of the deformed statistical divergences and their convex duality, the properties of the escort densities, and the affine manifold structure of the statistical bundle.}

\section{Introduction}\label{sec:introduction}
In this paper we study a geometry on the set $\mathcal P$ of strictly positive probability densities on a probability space $(\mathbb{X},\mathcal{X},\mu)$. In some cases one is led to consider the set $\overline{\mathcal P}$ of probability densities i.e., without the restriction of strict positivity. There is a considerable literature on the Information Geometry in the sense defined in the Amari and Nagaoka monograph \cite{amari|nagaoka:2000} on $\mathcal P$. There is also a non-parametric approach i.e., we are not considering the geometry induced on the parameter set of a given statistical model but on the full set of densities. This was done in \cite{pistone|sempi:95,pistone:2013GSI} by using logarithmic chart to represent densities. 

A different approach, that leads to the construction of an Hilbert manifold on $\mathcal P$, has been proposed by N.J.~Newton in \cite{newton:2012,newton:2016}. It is based on the use of the chart $p \mapsto p - 1 - \log p$ instead of a purely logarithmic chart. This paper presents a variation on the same theme by enlarging the class of permitted charts.

Let $\mathcal{M} \subset \mathcal P$. At each $p \in \mathcal M $, the Hilbert space of square-integrable random variables $L^2(p)$ provides a fiber that sits at $p \in \mathcal{M}$, so we can define the \emph{Hilbert bundle} with base $\mathcal{M}$. The Hilbert bundle, or similar bundles with fibers which are vector spaces of random variables, provides a convenient framework for Information Geometry, cf. \cite{amari:87dual,kass|vos:1997,pistone:2013GSI}.

If $\mathcal{M}$ is an exponential manifold in the sense of \cite{pistone|sempi:95}, there exists a splitting of
each fiber $L^2(p) = \mathcal {H}_p \oplus \mathcal {H}_p^\perp$, such that each $\mathcal {H}_p$ contains a dense vector sub-space which is an expression of the tangent space $T_p\mathcal M$ of the manifold. Moreover, the manifold on $\mathcal{M}$ is an affine manifold (it can be defined by an atlas whose transition mapping are affine) and it is also an Hessian manifold (the inner product on each fiber is the second derivative of a potential function, \cite{shima:2007}). 

When the sample space is finite and $\mathcal M$ is the full set $\mathcal P$ of positive probability densities, then $\mathcal {H}_p$ is the space of centered square integrable random variables $L^2_0(p)$ and moreover there is an identification of the fiber with the tangent space $\mathcal {H}_p \simeq T_p\mathcal P$. A similar situation occurs even when $\mathcal M$ is a finite-dimensional exponential family. It is difficult to devise set-ups other than those mentioned above, where the identification of the Hilbert fiber with the tangent space holds true. In fact, a necessary condition would be the topological linear isomorphism among fibers. One possible option would be to take as fibers the spaces of bounded functions $L^\infty_0(p)$, see G.~Loaiza and H.R.~Quiceno \cite{loaiza|quiceno:2013-JMAA}. 

This difficulty is overcome in the  N.J.~Newton's setting. On a probability space $(X,\mathcal X, \mu)$, he considers the ``balanced chart'' $\mathcal M \ni p \mapsto \log p + p - 1 \in L^2_0(\mu)$. In this chart, all the tangent spaces are identified with the fixed Hilbert space $L^2_0(\mu)$ so that the statistical Hilbert bundle is trivialized.

N.J.~Newton balanced chart falls in a larger class of ``deformation'' of the usual logarithmic representation. It is in fact an instance of the class of ``deformed logarithm'' as defined by J.~Naudts \cite{naudts:2011GTh}. It is defined as $\log_A(x) = \int_1^x dt/A(t)$, where $A$ is a suitable increasing function. If $A$ is bounded, then a special class of deformed logarithms results. It includes N.J.~Newton balanced chart as well as other deformed logarithms, notably the G.~Kaniadakis logarithm \cite{kaniadakis:2002PhRE,kaniadakis:2005PhRE,pistone:2009EPJB}.

In this paper, we try a mixture of the various approaches by considering deformed logarithms with linear growth as established by N.J.~Newton, but we do not look for a trivialization of the Hilbert bundle. Instead we construct an affine atlas of charts, each one centered at a $p \in \mathcal M$. This is obtained by adapting the construction of the exponential manifold of \cite{pistone:2013GSI} to the deformed exponential models as defined by J.~Naudts \cite{naudts:2011GTh}. Moreover, we allow for a form of general reference measure by using an idea introduced by R.F.~Vigelis and C.C.~Cavalcante \cite{vigelis|cavalcante:2013}. That is, each density has the form $q = \exp_A(u - K_p(u) + \log_A p)$, where $\exp_A = \log_A^{-1}$ is an exponential-like function which has a linear growth at $+\infty$ and is dominated by an exponential at $-\infty$. 

The formalism of deformed exponentials is discussed in Sec.~\ref{sec:deformed}. This section is intended to be self-contained and contains material from the references discussed above without an explicit mention. The following Sec.~\ref{sec:nigel-newt-deform} is devoted to the study of non-parametric deformed exponential families. In Sec.~\ref{sec:convex-conjugate} we introduce the formulation of the divergence, in accordance with our approach. In Sec.~\ref{sec:riem-manif-based} the construction of the Hilbert statistical bundle is outlined.

A first version of this piece of research has been presented at the GSI 2017 Conference \cite{montrucchio|pistone:2017} and we refer to that paper for some of the proofs.

\section{Deformed exponential}\label{sec:deformed}

Let us introduce a class of the deformed exponential, according to the formalism introduced by \cite{naudts:2011GTh}.
Assume to be given a function $A$ from $]0,+\infty[ $ onto $]0,a[$,
strictly increasing, continuously differentiable and such that $\left\Vert A^{\prime }\right\Vert_{\infty } < \infty$. This implies $a = \left\Vert A\right\Vert_{\infty}$ and $A(x) \le
\left\Vert A^{\prime }\right\Vert_{\infty } x$, so that $\int_0^1 d\xi/A(\xi) \  = +\infty$. 

The $A$-logarithm is the function 
\begin{equation*}
\log _{A}(x)=\int_{1}^{x}\frac{d\xi}{A(\xi )}\ ,\quad x\in ]0,+\infty
\lbrack \ .
\end{equation*}
The $A$-logarithm is strictly increasing from $-\infty $ to $+\infty $, its
derivative $\log _{A}^{\prime }(x)=1/A(x)$ is positive and strictly
decreasing for all $x>0$, hence $\log _{A}$ is strictly concave.

By inverting the $A$-logarithm, one obtains the $A$-exponential, $\exp
_{A}=\log _{A}^{-1}$. The function $\exp _{A}\colon ]-\infty ,+\infty
\lbrack \rightarrow ]0,+\infty \lbrack $ is strictly increasing, strictly
convex, and is the solution to the Cauchy problem 
\begin{equation}\label{Aexp}
\exp _{A}^{\prime }(y)=A(\exp _{A}(y)),\quad \exp _{A}(0)=1\ .
\end{equation}
As a consequence, we have the linear bound 
\begin{equation}\label{eq:lip}
\left\vert \exp _{A}(y_{1})-\exp _{A}(y_{2})\right\vert \leq \left\Vert
A\right\Vert _{\infty }\left\vert y_{1}-y_{2}\right\vert \ .
\end{equation}

The behavior of the $A$-logarithm is linear for large arguments and super-logarithmic for small arguments. To derive explicit bounds, set
\begin{equation*}
\alpha_1 = \min_{x\le 1} \frac{A(x)}x \ , \quad \alpha_2 = \max_{x \le 1} \frac{A(x)}x \ ,
\end{equation*}
namely, they are the best constants such that $\alpha_1 x \le A(x) \le \alpha_2 x$ for  $0 < x \le 1$. Note that $\alpha_1 \geq 0$ while $\alpha_2 > 0$. If in addition also $\alpha_1 > 0$, then
\begin{equation}\label{eq:bound1}
  \frac1{\alpha_2} \log x \le \log_A x \le  \frac1{\alpha_1} \log x \ , \quad 0 < x \le 1 \ .
\end{equation}
If otherwise $\alpha_1=0$, the left inequality is true only.

For $x \ge 1$ we have $A(1) \leq A(x) < \left\Vert A\right\Vert_{\infty}$, hence
\begin{equation}\label{eq:bound2}
 \frac1{\left\Vert A\right\Vert_{\infty}}(x-1) < \log_A x \leq \frac1{A(1)}(x-1) \ , \quad x \ge 1 \ .
\end{equation}
Under the assumptions made on the function $A$, the coefficient $\alpha_1 > 0$, if and only if $A'(0+) > 0$. 
\subsection{Examples}
\label{sec:examples}
The main example of $A$-logarithm is the N.J. Newton $A$-logarithm 
\cite{newton:2012}, with 
\begin{equation*}
A(\xi)=1-\frac1{1+\xi}=\frac{\xi}{1+\xi} \ ,
\end{equation*}
so that 
\begin{equation*}
\log_A(x) = \log x + x - 1\ .
\end{equation*}

There is a simple algebraic expression for the product,
\begin{equation*}
  \log_A(x_1x_2) = \log_A(x_1) + \log_A(x_2) + (x_1-1)(x_2-1) \ .
\end{equation*}

Other similar examples are available in the literature. One is a special
case of the G. Kaniadakis' exponential of \cite{kaniadakis:2001PhA}, generated by
\begin{equation*}
A(\xi) = \frac{2\xi^2}{1+\xi^2} \ .
\end{equation*}
It turns out
\begin{equation*}
\log_A x = \frac{x-x^{-1}}2 \ ,
\end{equation*}
whose inverse provides
\begin{equation*}
\exp_A(y) = y + \sqrt{1+y^2} \ .
\end{equation*}

A remarkable feature of the G. Kaniadakis' exponential is the relation
\begin{equation*}
\exp_A(y)\exp_A(-y) = \left(y+\sqrt{1+y^2}\right)\left(-y+\sqrt{1+y^2}
\right) = 1
\end{equation*}

Notice that the $A$ function for N.J. Newton exponential is concave, while the $A$ function of G. Kaniadakis exponential is not.

Another example is $A(\xi) = 1 - 2^{-\xi}$, which gives $\log_A(x) = \log_2(1 - 2^{-x})$ and $\exp_A(y) = \log_2(1+2^y)$. 

Notable examples of deformed exponentials that do not fit into our set of
assumptions are Tsallis q-logarithms, see \cite{tsallis:1988}. For instance,  for $q=1/2$,
\begin{equation*}
\log_{1/2}x = 2\left( \sqrt{x}-1\right) = \int_{1}^{x}\frac{d \xi}{\sqrt{\xi}}.
\end{equation*}

In this case, $\log _{1/2}(0+)=-\int_{0}^{1}d\xi /\sqrt{\xi }=-2$, so that
the inverse is not defined for all real numbers. Tsallis logarithms provide models having heavy tails, which is not the case in our setting.

\subsection{Superposition operator}
\label{sec:superposition-operator}

The deformed exponential will be employed to represent positive probability densities
in the type $p(x) = \exp_A[u(x)]$, where $u$ is a random variable on a
probability space $(\mathbb{X}, \mathcal{X},\mu)$. For this reason, we are
interested in the properties of the \emph{superposition operator} 
\begin{equation}\label{eq:superposition}
S_A \colon u \mapsto \exp_A\circ\, u
\end{equation}
defined in some convenient functional setting. About superposition operators, see e.g. \cite[Ch. 1]{ambrosetti|prodi:1993} and \cite[Ch. 3]{appell|zabrejko:1990}.

It is clear from the Lipschitz condition ~\eqref{eq:lip} that $\exp _{A}(u)\leq 1+\left\Vert
A\right\Vert _{\infty }\left\vert u\right\vert $, which in turn implies that
the superposition operator $S_{A}$ maps $L^{\alpha }(\mu )$ into itself for all $\alpha \in [1,+\infty]$ and the mapping is uniformly Lipschitz
with constant $\left\Vert A\right\Vert _{\infty }$. Notice that we are
assuming that $\mu$ is a finite measure.

The superposition operator $S_{A}\colon L^{\alpha }(\mu )\rightarrow L^{\alpha }(\mu )$ is 1-to-1 and its image consists of all positive random variables $f$ such that $\log _{A}f\in L^{\alpha }(\mu )$. The following proposition is intercepts a more general result \cite{newton:2016}. We give a direct proof here for sake of completeness and because our setting includes deformed logarithms other than the case treated there.

\begin{proposition}
\label{prop:BBA}
\begin{enumerate}
\item For all $\alpha \in [1,\infty]$, the superposition operator $S_A$ of
Eq.~\eqref{eq:superposition} is Gateaux-differentiable with derivative 
\begin{equation}  \label{eq:derivative-of-exp}
d S_A(u)[h] = A(\exp_A(u))h \ .
\end{equation}
\item $S_A$ is Fr\'echet-differentiable from $L^{\alpha}(\mu)$
to $L^{\beta}(\mu)$, for all $\alpha > \beta \ge 1$.
\end{enumerate}
\end{proposition}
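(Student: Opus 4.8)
The plan is to treat the two statements separately, because Part~1 is an ``equal-space'' assertion that can be settled by dominated convergence, whereas Part~2 involves a genuine loss of integrability from $L^{\alpha}(\mu)$ to $L^{\beta}(\mu)$ that must be controlled by interpolating between a first- and a second-order estimate. The common preliminary is to observe that $\exp_{A}$ is of class $C^{2}$: differentiating the Cauchy problem \eqref{Aexp} gives $\exp_{A}''(y) = A'(\exp_{A}(y))\,A(\exp_{A}(y))$, so that $\absoluteval{\exp_{A}'(y)} \le \normat{\infty}{A}$ and $\absoluteval{\exp_{A}''(y)} \le \normat{\infty}{A'}\,\normat{\infty}{A} =: L$.

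For Part~1, fix $u,h \in L^{\alpha}(\mu)$ and set $g_{t} = t^{-1}\left(\exp_{A}(u+th) - \exp_{A}(u)\right) - A(\exp_{A}(u))\,h$. Pointwise $g_{t} \to 0$ as $t \to 0$, by differentiability of $\exp_{A}$. First I would dominate the difference quotient using the Lipschitz bound \eqref{eq:lip} together with $A < \normat{\infty}{A}$, obtaining $\absoluteval{g_{t}} \le 2\normat{\infty}{A}\,\absoluteval{h}$, which belongs to $L^{\alpha}(\mu)$. For $\alpha < \infty$ dominated convergence then yields $\normat{\alpha}{g_{t}} \to 0$. The case $\alpha = \infty$ is the one exception, since dominated convergence is unavailable; there I would instead invoke the second-order estimate $\absoluteval{g_{t}} \le \tfrac{L}{2}\absoluteval{t}\,h^{2} \le \tfrac{L}{2}\absoluteval{t}\,\normat{\infty}{h}^{2}$, which tends to $0$ uniformly. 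Since $h \mapsto A(\exp_{A}(u))\,h$ is multiplication by a function bounded by $\normat{\infty}{A}$, hence a bounded operator on $L^{\alpha}(\mu)$, this proves Gateaux differentiability with the stated derivative.

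For Part~2 the engine is the second-order Taylor estimate. Writing the remainder as $R = \exp_{A}(u+h) - \exp_{A}(u) - A(\exp_{A}(u))\,h$, Taylor's formula and $\normat{\infty}{\exp_{A}''} \le L$ give the quadratic bound $\absoluteval{R} \le \tfrac{L}{2}h^{2}$, while \eqref{eq:lip} gives the competing linear bound $\absoluteval{R} \le 2\normat{\infty}{A}\,\absoluteval{h}$. I would combine them through $\min(a,b) \le a^{\theta}b^{1-\theta}$, valid for $a,b \ge 0$ and $\theta \in [0,1]$, to get
\begin{equation*}
\absoluteval{R} \le \left(\tfrac{L}{2}\right)^{\theta}\left(2\normat{\infty}{A}\right)^{1-\theta}\absoluteval{h}^{1+\theta} =: C\,\absoluteval{h}^{1+\theta} \ .
\end{equation*}
The point is to pick $\theta$: choose any $\theta \in (0,1]$ with $(1+\theta)\beta \le \alpha$, which exists precisely because $\alpha > \beta$. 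Since $\mu$ is finite, the embedding $L^{\alpha}(\mu) \hookrightarrow L^{(1+\theta)\beta}(\mu)$ is continuous, so
\begin{equation*}
\normat{\beta}{R} \le C\,\normat{(1+\theta)\beta}{h}^{1+\theta} \le C'\,\normat{\alpha}{h}^{1+\theta} \ ,
\end{equation*}
whence $\normat{\beta}{R}/\normat{\alpha}{h} \le C'\normat{\alpha}{h}^{\theta} \to 0$. As the candidate derivative is bounded from $L^{\alpha}(\mu)$ to $L^{\beta}(\mu)$ (multiplication by a bounded function followed by the finite-measure embedding), this is exactly Fr\'echet differentiability.

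The hard part is the regime $\beta < \alpha < 2\beta$. There the naive quadratic bound $\normat{\beta}{R} \le \tfrac{L}{2}\normat{2\beta}{h}^{2}$ is worthless, because $2\beta > \alpha$ and $L^{\alpha}(\mu)$ does not embed into $L^{2\beta}(\mu)$; one cannot afford the full square. Trading a fraction of the quadratic decay for the linear bound via the $\min$-interpolation, and choosing $\theta$ so that $(1+\theta)\beta$ lands inside the admissible range $(\beta,\alpha]$, is the step that makes the argument go through, and identifying this exponent is the crux of the proof.
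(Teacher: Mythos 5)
Your proof is correct, and while Part~1 matches the paper, Part~2 takes a genuinely different route. For Part~1 you do essentially what the paper does: pointwise convergence of the difference quotient, domination by $2\normat{\infty}{A}\absoluteval{h}$ for $\alpha<\infty$ (the paper routes this domination through Jensen's inequality applied to the integral form of the remainder, you get it directly from the Lipschitz bound \eqref{eq:lip} --- same substance), and the second-order uniform estimate for $\alpha=\infty$, where dominated convergence is indeed unavailable. For Part~2 the paper proceeds quite differently: it applies H\"older's inequality with conjugate exponents $\alpha/\beta$ and $\alpha/(\alpha-\beta)$ to reduce the problem to showing that $\iint \absoluteval{A(\exp_A(u+rh))-A(\exp_A(u))}^{\gamma}\,dr\,d\mu \to 0$ with $\gamma = \alpha\beta/(\alpha-\beta)$, and then handles this by splitting the integral over $\set{\absoluteval{h}\le\delta}$ and $\set{\absoluteval{h}>\delta}$ together with Chebyshev's inequality. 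You instead interpolate pointwise between the linear bound $\absoluteval{R}\le 2\normat{\infty}{A}\absoluteval{h}$ and the quadratic Taylor bound $\absoluteval{R}\le \tfrac{L}{2}h^{2}$ --- legitimate, since $\exp_A'' = A'(\exp_A)\,A(\exp_A)$ is bounded by $L=\normat{\infty}{A'}\normat{\infty}{A}$ under the paper's standing hypothesis $\normat{\infty}{A'}<\infty$ --- to get $\absoluteval{R}\le C\absoluteval{h}^{1+\theta}$, closing with the finite-measure embedding $L^{\alpha}(\mu)\hookrightarrow L^{(1+\theta)\beta}(\mu)$ for any $\theta\in(0,1]$ with $(1+\theta)\beta\le\alpha$; your choice of $\theta$ correctly covers the delicate range $\beta<\alpha<2\beta$ as well as $\alpha=\infty$ (where the paper simply says the result follows \emph{a fortiori}). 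The underlying ingredients are the same (the paper's $\normat{\infty}{A'}\normat{\infty}{A}\,\delta$ bound on $\set{\absoluteval{h}\le\delta}$ is precisely your Lipschitz constant $L$ for $A\circ\exp_A$), but your organization is more elementary --- no H\"older step, no set splitting --- and quantitatively stronger, since it exhibits an explicit H\"older modulus $\normat{L^{\beta}(\mu)}{R} = O\left(\normat{L^{\alpha}(\mu)}{h}^{1+\theta}\right)$ where the paper's argument only delivers $o\left(\normat{L^{\alpha}(\mu)}{h}\right)$. What the paper's softer splitting argument buys in exchange is robustness: it would survive with $A\circ\exp_A$ merely uniformly continuous, whereas your second-order Taylor bound genuinely uses the Lipschitz property of $A$.
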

\begin{proof}
\begin{enumerate}
\item Eq.~\eqref{Aexp} implies that for each couple of random variables 
$u,h\in L^{\alpha }(\mu )$ 
\begin{equation*}  
\lim_{t\rightarrow 0}t^{-1}\left( \exp _{A}(u+th)-\exp _{A}(u)\right)
-A(\exp _{A}(u))h=0
\end{equation*}
holds point-wise. Moreover, if each $\alpha \in \lbrack 1,\infty \lbrack $, by Jensen inequality we
infer that if $t > 0$ then 
\begin{multline*}
\left\vert t^{-1}\left( \exp _{A}(u+th)-\exp _{A}(u)\right) -A(\exp
_{A}(u))h\right\vert ^{\alpha }\leq \\
t^{-1}\left\vert h\right\vert ^{\alpha }\int_{0}^{t}\left\vert A(\exp
_{A}(u+rh))-A(\exp _{A}(u))\right\vert ^{\alpha }\ dr \leq \left( 2\left\Vert
A\right\Vert _{\infty }\right) ^{\alpha }\left\vert h\right\vert ^{\alpha }\
.
\end{multline*}
Now, dominated convergence forces the limit to hold in $L^{\alpha }(\mu )$. If $t < 0$, it sufficies to replace $h$ with $-h$.

Whenever $\alpha =\infty $, we can use the second-order bound 
\begin{multline*}
\left\vert t^{-1}\left( \exp _{A}(u+th)-\exp _{A}(u)\right) -A(\exp
_{A}(u))h\right\vert = \\
\vert t \vert^{-1}h^{2}\left\vert \int_{0}^{t}(t-r)\frac{d}{dr}A(\exp _{A}(u+rh))\
dr\right\vert \leq \frac{t}{2}\left\Vert h\right\Vert _{\infty
}^{2} \normat \infty {A'} \normat \infty A \ .
\end{multline*}
As $\left\Vert A^{\prime }\cdot A\right\Vert _{\infty }<\infty $, the
RHS goes to 0 as $t\rightarrow 0$ uniformly for each $h\in L^{\infty }(\mu )$.
\item Given $u,h\in L^{\alpha }(\mu )$, thanks again to Taylor
formula,  
\begin{multline*}
\int \left\vert \exp _{A}(u+h)-\exp _{A}(u)-A(\exp _{A}(u))h\right\vert
^{\beta }\ d\mu \leq \\
\int \left\vert h\right\vert ^{\beta }\int_{0}^{1}\left\vert A(\exp
_{A}(u+rh))-A(\exp _{A}(u))\right\vert ^{\beta }\ dr\ d\mu \ .
\end{multline*}
By means of H\"{o}lder inequality, with conjugate exponents $\alpha /\beta $ and 
$\alpha /(\alpha -\beta )$, the RHS is bounded by 
\begin{equation*}
\left( \int \left\vert h\right\vert ^{\alpha }\ d\mu \right) ^{\frac{\beta }{\alpha }}
\left( \iint \left\vert A(\exp _{A}(u+rh))-A(\exp _{A}(u))\right\vert ^{\frac{\alpha \beta }{\alpha -\beta }}\ dr\ d\mu\right) ^{\frac{\alpha -\beta 
}{\alpha }}\ .
\end{equation*}
Consequently, 
\begin{multline*}
\left\Vert h\right\Vert _{L^{\alpha }(\mu )}^{-1}\left\Vert \exp
_{A}(u+h)-\exp _{A}(u)-A(\exp _{A}(u))h\right\Vert _{L^{\beta }(\mu )}\leq \\
\left( \iint \left\vert A(\exp _{A}(u+rh))-A(\exp _{A}(u))\right\vert ^{\frac{\alpha \beta }{\alpha -\beta }}\ dr\ d\mu\right) ^{\frac{\alpha -\beta 
}{\alpha \beta }}\ .
\end{multline*}
In order to show that the RHS vanishes as $\left\Vert h\right\Vert
_{L^{\alpha }(\mu )}\rightarrow 0$, observe that for all $\delta >0$ we have 
\begin{equation*}
\left\vert A(\exp _{A}(u+rh))-A(\exp _{A}(u))\right\vert \leq 
\begin{cases}
2\left\Vert A\right\Vert _{\infty } & \text{always,} \\ 
\normat \infty {A'} \normat \infty A \delta & \text{if $\left\vert h\right\vert \leq \delta $,}
\end{cases}
\end{equation*}
so that, decomposing the double integral as $\iint =\iint_{\left\vert h\right\vert \leq \delta }+\iint_{\left\vert
h\right\vert >\delta }$,
we obtain 
\begin{multline*}
\iint \left\vert A(\exp _{A}(u+rh))-A(\exp _{A}(u))\right\vert ^{\gamma }\ dr\
 d\mu \leq \\
\left( 2\left\Vert A\right\Vert _{\infty }\right) ^{\gamma }\mu \left\{
\left\vert h\right\vert >\delta \right\} +\left(\normat \infty {A'} \normat \infty A \delta \right) ^{\gamma }\leq \\
\left( 2\left\Vert A\right\Vert _{\infty }\right) ^{\gamma }\delta ^{-\alpha
}\int \left\vert h\right\vert ^{\alpha }\ d\mu +\left(\normat \infty {A'} \normat \infty A \delta \right) ^{\gamma }\ ,
\end{multline*}
where $\gamma =\alpha \beta /(\alpha -\beta )$ and we have used Cebi\v{c}ev
inequality. Now it is clear that the last bound implies the conclusion for
each $\alpha <\infty $. The case $\alpha =\infty $ follows \emph{a fortiori}.
\qed\end{enumerate}
\end{proof}

\begin{remark}
It is not generally true that the superposition operator $S_A$ be Fr\'echet differentiable for $\alpha = \beta$, cf. \cite[\S 1.2]{ambrosetti|prodi:1993}. We repeat here the well known counter-example.

Assume $\mu$ is a non-atomic probability measure. For each $\lambda \in \mathbb{R}$ and $\delta > 0$ define the simple
function 
\begin{equation*}
h_{\lambda,\delta}(x) = 
\begin{cases}
\lambda & \text{if $\left\vert x \right\vert \le \delta$,} \\ 
0 & \text{otherwise.}
\end{cases}
\end{equation*}
For each $\alpha \in [1,+\infty[$ we have 
\begin{equation*}
\lim_{\delta \to 0} \left\Vert
h_{\lambda,\delta}\right\Vert_{L^{\alpha}(\mu)}= \lim_{\delta\to 0} \left\vert
\lambda \right\vert \mu\left\{\left\vert x \right\vert \le \delta
\right\}^{1/\alpha} = 0 \ .
\end{equation*}
Differentiability at 0 in $L^{\alpha}(\mu)$ would imply for all $\lambda$ 
\begin{multline*}
0 = \lim_{\delta\to0} \frac{\left\Vert \exp_A(h_{\lambda,\delta}) - 1 -
A(1) h_{\lambda,\delta}\right\Vert_{L^{\alpha}(\mu)}}{\left\Vert
h_{\lambda,\delta}\right\Vert_{L^\alpha(\mu)}} = \\
\lim_{\delta\to0} \frac{\left\vert \exp_A(\lambda) - 1
-A(1)\lambda\right\vert \mu\left\{x | \left\vert x \right\vert \le \delta
\right\}^{1/\alpha}}{ \left\vert \lambda \right\vert \mu\left\{x |
\left\vert x \right\vert \le \delta \right\}^{1/\alpha}} = \left\vert \frac{\exp_A(\lambda) - 1}\lambda - A(1)\right\vert \ ,
\end{multline*}
which is a contradiction.
\end{remark}

\begin{remark}
  Theorems about the differentiability of the deformed exponential are important because of computations like $\derivby \theta \exp_A(v(\theta)) = \exp'_A(v(\theta)) \dot v(\theta)$ are essential for the geometrical theory of statistical models. Several variations in the choice of the combination domain space - image space are possible. Also, one could look at a weaker differentiability property than Frech\'et differentiability. Our choice is motivated by the results of the following sections. A large class of cases is discussed in \cite{newton:2016}
 \end{remark}
 
\begin{remark}  
It would also be worth to study the action of the superposition operator on spaces of differentiable functions, for example Gauss-Sobolev spaces of P.~Malliavin \cite{malliavin:1995}. If $\mu$ is the standard Gaussian measure on $\reals^n$, and $u$ is a differentiable function such that $u, \partiald {x_i} u \in L^2(\mu)$, $i=1,\dots,n$, then it follows that $\exp_A(u) \in L^2(\mu)$ as well as $\partiald {x_i} \exp_A(u) \in L^2(\mu)$, since 
\begin{equation*}
\frac{\partial}{\partial x_i} \exp_A(u(x)) = A(\exp_A(u(x)) \frac{\partial}{\partial x_i} u(x) \ .
\end{equation*}
We do not pursue this line of investigation here.
\end{remark}

\section{Deformed exponential family based on $\exp_A$}

\label{sec:nigel-newt-deform} According to  \cite{vigelis|cavalcante:2013,ay|jost|le|schwachhofer:2017IGbook}, let us define
the deformed exponential curve in the space of positive measures on $(
\mathbb{X},\mathcal{X})$ as follows
\begin{equation*}
  t\mapsto \mu _{t}=\exp _{A}(tu+\log
_{A}p)\cdot \mu \ , \quad u\in L^{1}(\mu ) \ . 
\end{equation*}
We have the following inequality:
\begin{equation*}
\exp _{A}(x+y)\leq
\left\Vert A\right\Vert _{\infty }x^{+}+\exp _{A}(y).
\end{equation*}
Actually, it is true for $x\leq 0$, as being $\exp _{A}$ 
increasing. For $x=x^{+}>0$ the inequality follows from Eq.~\eqref{eq:lip}.
As a consequence, each $\mu _{t}$ is a finite measure, $\mu _{t}(\mathbb{X})\leq t\left\Vert
A\right\Vert _{\infty }\int u^{+}\ d\mu +1$, with $\mu _{0}=p\cdot \mu $.
The curve is actually continuous and differentiable in $L^1(\mu)$ because the point-wise derivative of the density $p_{t}=\exp _{A}(tu+\log _{A}(p))$ is $\dot{p}_{t}=A(p_{t})u$ so that $\left\vert \dot{p}_{t}\right\vert \leq \left\Vert
A\right\Vert _{\infty }\left\vert u\right\vert $. In conclusion $\mu _{0}=p \cdot \mu$ and $\dot{\mu}_{0}=A\left( p\right) u \cdot \mu$.

There are two ways to normalize the density $p_t$ to total mass 1, either dividing by a normalizing constant $Z(t)$ to get the statistical
model
$t \mapsto \exp_A(tu + \log_A p)/Z(t)$ or, subtracting a constant $
\psi(t)$ from the argument to get the model $t \mapsto \exp_A(tu - \psi(t) +
\log_A(p))$. Unlike the standard exponential case, where these two methods lead to the
same result, this is not the case for deformed exponentials where $
\exp_A(\alpha+\beta) \neq \exp_A(\alpha)\exp_A(\beta)$. We choose in the
present paper the latter option.

Here we use the ideas of \cite{naudts:2011GTh,vigelis|cavalcante:2013,ay|jost|le|schwachhofer:2017IGbook}
to construct deformed non-parametric exponential families. Recall that we
are given: the probability space $(\mathbb{X},\mathcal{X},\mu )$; the set $\mathcal{P}$ of the positive probability densities and the function $A$
satisfying the conditions set out in Section \ref{sec:deformed}. Throughout
this section, the density $p\in \mathcal{P}$ will be fixed.

The following proposition is taken from \cite{montrucchio|pistone:2017} where a detailed proof is given.

\begin{proposition}\label{prop:Aexp}
\begin{enumerate}
\item The mapping $L^{1}(\mu )\ni u\mapsto \exp _{A}(u+\log _{A}p)\in
L^{1}(\mu )$ has full domain and is $\left\Vert A\right\Vert _{\infty }$
-Lipschitz. Consequently, the mapping 
\begin{equation*}
u\mapsto \int g\exp _{A}(u+\log _{A}p)\ d\mu
\end{equation*}
is $\left\Vert g\right\Vert _{\infty }\cdot \left\Vert A\right\Vert _{\infty
}$-Lipschitz for each bounded function $g$.

\item For each $u\in L^{1}(\mu )$ there exists a unique constant $
K_{p}(u)\in \mathbb{R}$ such that $\exp _{A}(u-K_{p}(u)+\log _{A}p)\cdot \mu 
$ is a probability.

\item $K_{p}(u)=u$ if, and only if, $u$ is constant. In such a
case, 
\begin{equation*}
\exp _{A}(u-K_{p}(u)+\log _{A}p)\cdot \mu =p\cdot \mu \ .
\end{equation*}
Otherwise, $\exp _{A}(u-K_{p}(u)+\log _{A}p)\cdot \mu \neq p\cdot \mu $.

\item\label{item:Aexp4} A density $q$ is of the form $q=\exp _{A}(u-K_p(u)+\log_A p)$, with $
u\in L^{1}(\mu )$ if, and only if, $\log _{A}q - \log_A p \in L^1(\mu)$.

\item If 
\begin{equation*}
\exp _{A}(u-K_{p}(u)+\log _{A}p)=\exp _{A}(v-K_{p}(v)+\log _{A}p)\ ,
\end{equation*}
 with $u,v\in L^{1}(\mu )$, then $u-v$ is constant.

\item The functional $K_{p}\colon L^{1}(\mu )\rightarrow \mathbb{R}$ is
translation invariant. More specifically,
\begin{equation*}
K_{p}(u+c)=K_{p}(u)+cK_{p}(1)
\end{equation*}
holds for all $c\in \mathbb{R}$.

\item $K_{p}:L^{1}(\mu )\rightarrow \mathbb{R}$ is continuous and convex.
\end{enumerate}
\end{proposition}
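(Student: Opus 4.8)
The plan is to reduce both statements to the analytic properties of the auxiliary functional
\[
H(u,c) = \int \exp_A(u - c + \log_A p)\ d\mu, \qquad (u,c) \in L^{1}(\mu) \times \reals,
\]
which is finite everywhere because $\exp_A(u - c + \log_A p) \le p + \normat\infty A\,(\absoluteval u + \absoluteval c)$ by the Lipschitz bound \eqref{eq:lip} together with $\exp_A(\log_A p) = p$. By part~2 of the proposition, $K_p(u)$ is exactly the unique root $c$ of $H(u,c) = 1$; moreover $H(u,\cdot)$ is strictly decreasing and, by dominated convergence (dominating by the fixed integrable $\exp_A(u + \log_A p)$ for $c \ge 0$) and by monotone convergence as $c \to -\infty$, it maps $\reals$ onto $]0,+\infty[$. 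The only other structural fact I need is that $H$ is $\normat\infty A$-Lipschitz in its first argument uniformly in $c$, that is $\absoluteval{H(v,c) - H(u,c)} \le \normat\infty A \normat{1}{v-u}$, which is immediate from \eqref{eq:lip}.

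For convexity, fix $u_0,u_1 \in L^{1}(\mu)$ and set $u_\lambda = (1-\lambda)u_0 + \lambda u_1$ and $c_\lambda = (1-\lambda)K_p(u_0) + \lambda K_p(u_1)$ for $\lambda \in [0,1]$. Since $H(u_\lambda,\cdot)$ is strictly decreasing with root $K_p(u_\lambda)$, the desired inequality $K_p(u_\lambda) \le c_\lambda$ is equivalent to $H(u_\lambda, c_\lambda) \le 1$. Writing $y_i = u_i - K_p(u_i) + \log_A p$, the argument $u_\lambda - c_\lambda + \log_A p$ equals $(1-\lambda)y_0 + \lambda y_1$, so the convexity of $\exp_A$ gives the pointwise bound $\exp_A((1-\lambda)y_0 + \lambda y_1) \le (1-\lambda)\exp_A(y_0) + \lambda \exp_A(y_1)$; integrating and using $\int \exp_A(y_i)\ d\mu = 1$ yields $H(u_\lambda,c_\lambda) \le 1$, hence convexity.

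For continuity I would argue sequentially. Let $u_n \to u$ in $L^{1}(\mu)$ and write $k = K_p(u)$. First I establish that $(K_p(u_n))_n$ is bounded: choosing $c_- < c_+$ with $H(u,c_-) > 3/2$ and $H(u,c_+) < 1/2$, the uniform Lipschitz bound forces $H(u_n,c_-) > 1 > H(u_n,c_+)$ for large $n$, so $K_p(u_n) \in\ ]c_-,c_+[$ by monotonicity. Then, along any subsequence with $K_p(u_n) \to \ell$, the decomposition $\absoluteval{H(u_n,K_p(u_n)) - H(u,\ell)} \le \normat\infty A \normat{1}{u_n - u} + \absoluteval{H(u,K_p(u_n)) - H(u,\ell)}$ shows $H(u,\ell) = \lim_n H(u_n,K_p(u_n)) = 1$, whence $\ell = k$ by uniqueness. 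As every convergent subsequence of the bounded sequence $(K_p(u_n))_n$ has limit $k$, the whole sequence converges to $k$. The only delicate point is this boundedness step: convexity and finiteness alone do not give continuity in infinite dimensions, so the argument must genuinely exploit the uniform Lipschitz control of $H$ in $u$ together with the strict monotonicity in $c$, rather than appeal to any abstract convexity theorem.
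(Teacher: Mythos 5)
What you do prove, you prove correctly, and in essentially the way the paper intends: the auxiliary functional $H(u,c)=\int \exp_A(u-c+\log_A p)\,d\mu$ is finite by the Lipschitz bound \eqref{eq:lip}, $\normat \infty A$-Lipschitz in $u$ uniformly in $c$, strictly decreasing and continuous in $c$ with limits $+\infty$ (monotone convergence) and $0$ (dominated convergence), which yields existence and uniqueness of the root $K_p(u)$, i.e.\ items~1 and~2; your convexity argument (the affine identity $u_\lambda-c_\lambda+\log_A p=(1-\lambda)y_0+\lambda y_1$, pointwise convexity of $\exp_A$, integration, comparison with the root) and your sequential continuity argument (sandwiching $c_-<K_p(u_n)<c_+$ via the uniform Lipschitz control, then a subsequence argument) are both sound, and you are right that in $L^1(\mu)$ one cannot shortcut the boundedness step by abstract convexity theorems. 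One phrasing flaw: you write ``by part~2 of the proposition, $K_p(u)$ is exactly the unique root,'' but part~2 is itself among the claims to be proved; the monotonicity and surjectivity facts you state in the same sentence \emph{are} the proof of part~2, so present them as such rather than citing the conclusion. Note that the paper itself defers the detailed proof to \cite{montrucchio|pistone:2017}, but its local analysis in Prop.~\ref{prop:subgradient} works with exactly this implicit equation $\int\exp_A(u+tv-\kappa+\log_A p)\,d\mu=1$, so your route is the natural reconstruction.

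The genuine gap is coverage: the proposition has seven items and your proposal addresses only items~1, 2 and~7, leaving items~3--6 entirely unproved. They are short consequences of the uniqueness in item~2 together with the strict monotonicity (hence injectivity) of $\exp_A$, but they must be stated: for item~5, equality of the two densities forces $u-K_p(u)=v-K_p(v)$ $\mu$-a.e., so $u-v$ is constant; for item~6, since $\exp_A\left(u+c-(K_p(u)+c)+\log_A p\right)=\exp_A\left(u-K_p(u)+\log_A p\right)$ integrates to one, uniqueness gives $K_p(u+c)=K_p(u)+c$, and taking $u$ constant shows $K_p(1)=1$, which is the stated form; for item~3, if $u$ is the constant $c$ then $K_p(u)=c$ because $\int p\,d\mu=1$ and the normalizing constant is unique, while conversely $\exp_A\left(u-K_p(u)+\log_A p\right)=p$ a.e.\ forces $u=K_p(u)$ a.e.\ by injectivity of $\exp_A$, so a non-constant $u$ cannot reproduce $p\cdot\mu$; for item~4, the forward direction reads off $\log_A q-\log_A p=u-K_p(u)\in L^1(\mu)$, and conversely setting $u=\log_A q-\log_A p\in L^1(\mu)$ gives $\int\exp_A(u+\log_A p)\,d\mu=\int q\,d\mu=1$, whence $K_p(u)=0$ and $q$ has the required form. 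Your $H$-framework carries all of this with no new ideas, but as written the proof of the full statement is incomplete.
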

\subsection{Escort density}
\label{sec:escortdensity}
For each positive density $q\in \overline{\mathcal P}$, its \emph{escort density} is defined as
\begin{equation*}
  \escortof q = \frac{A(q)}{\int A(q)\ d\mu} \ ,
\end{equation*}
see \cite{naudts:2011GTh}. Notice that $0 \le A(q)\le \normat \infty {A}$. In particular, $\widetilde q = \escortof q$ is a bounded positive density. Hence, $\escortof {\overline{\mathcal P}}\subseteq  \overline{\mathcal P}\cap L^{\infty}(\mu)$. Clearly, the inclusion $\escortof{\mathcal P}\subseteq \mathcal P\cap L^{\infty}(\mu)$ is true as well. 

\begin{proposition}\label{prop:XCM}
  \begin{enumerate}
   \item\label{item:XCM1}
 The mapping $\operatorname{escort} \colon \overline{\mathcal P}\rightarrow  \overline{\mathcal P}\cap L^{\infty}(\mu)$ is a.s. injective. 
    \item\label{item:XCM2}
A bounded positive density $\widetilde q$ is an escort density, i.e., $\widetilde q\in\escortof {\overline{\mathcal P}}$ if, and only if,
\begin{equation}\label{eq:rangecondition}
 \lim_{\alpha \uparrow \normat \infty {A}} \int A^{-1}\left(\alpha \frac{\widetilde q}{\normat \infty {\widetilde q}}\right)\ d\mu \ge 1 \ .
\end{equation}
\item \label{item:XCM3}
Condition \eqref{eq:rangecondition} is fulfilled if $\mu\set{\widetilde q = \normat \infty {\widetilde q}} > 0$. In particular, every density taking a finite number of different values, i.e., a simple density, is an escort density.
\item\label{item:XCM4} If $\widetilde q_1 = \escortof {q_1}$ is an escort density, and $q_2$ is a bounded positive density such that
  \begin{equation*}
    \mu\set{\widetilde q_1 > t \normat \infty {\widetilde q_1}} \leq \mu\set{q_2 > t \normat \infty {q_2}}, \quad t > 0 \ , 
  \end{equation*}
then $q_2$ is an escort density as well.
\end{enumerate}
\end{proposition}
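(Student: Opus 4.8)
The four statements are essentially independent, so the plan is to treat them in turn, with a single auxiliary function carrying most of the analytic weight of items (2)--(4). For item (1) the argument is purely algebraic. Assume $\escortof{q_1} = \escortof{q_2}$ almost surely; then $A(q_1) = \lambda A(q_2)$ a.s. with $\lambda = \int A(q_1)\,d\mu \big/ \int A(q_2)\,d\mu > 0$. Since $A$ is strictly increasing, if $\lambda > 1$ this forces $q_1 \ge q_2$ a.s. with strict inequality on $\set{q_2 > 0}$, a set of positive measure because $q_2$ is a density; integrating contradicts $\int q_1\,d\mu = \int q_2\,d\mu = 1$. Symmetrically $\lambda < 1$ is impossible, so $\lambda = 1$, $A(q_1) = A(q_2)$ a.s., and injectivity of $A$ gives $q_1 = q_2$ a.s.

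The heart of the matter is item (2), and here I would introduce, for a fixed bounded positive density $\widetilde q$ with $M := \normat\infty{\widetilde q}$, the function
\[
F(\alpha) = \int A^{-1}\!\left(\alpha \frac{\widetilde q}{M}\right)\,d\mu, \qquad \alpha \in \,]0, \normat\infty A[ \ .
\]
The key reduction is that $\widetilde q$ is an escort density exactly when $A(q) = Z\widetilde q$ with $Z = \int A(q)\,d\mu$ for some $q\in\overline{\mathcal P}$; writing $\alpha = ZM$ and inverting gives $q = A^{-1}(\alpha\widetilde q/M)$, so the normalization $\int q\,d\mu = 1$ becomes $F(\alpha) = 1$. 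Using that $A^{-1}\colon\,]0,\normat\infty A[\,\to\,]0,\infty[$ is continuous, strictly increasing, with $A^{-1}(0+) = 0$, I would show $F$ is continuous and strictly increasing with $F(0+)=0$, so its range on the open interval is $]0, L[$, where $L = \lim_{\alpha\uparrow\normat\infty A} F(\alpha)$ is precisely the left-hand side of the range condition. Then $F(\alpha)=1$ is solvable in the open interval iff $L > 1$; the borderline $L=1$ is handled by monotone convergence, the limiting integrand $A^{-1}(\normat\infty A\,\widetilde q/M)$ being integrable with integral $L=1$ and yielding an admissible (possibly unbounded but integrable) $q$. Conversely, if $\widetilde q = \escortof q$, then $A(q) < \normat\infty A$ a.s. forces $ZM \le \normat\infty A$ and $1 = \int q\,d\mu = F(ZM) \le L$, so $L \ge 1$. \textbf{The hard part is exactly this boundary bookkeeping at $\alpha = \normat\infty A$}: one must verify that the recovered $q$ is a genuine element of $\overline{\mathcal P}$ even when $A^{-1}(\normat\infty A\,\widetilde q/M)$ blows up on $\set{\widetilde q = M}$, and that this set is $\mu$-null precisely when $L=1$.

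Item (3) falls out of the same $F$: on $\set{\widetilde q = M}$ the integrand equals $A^{-1}(\alpha)$, which tends to $+\infty$ as $\alpha\uparrow\normat\infty A$, so if $\mu\set{\widetilde q = M} > 0$ then $F(\alpha) \ge A^{-1}(\alpha)\,\mu\set{\widetilde q = M} \to +\infty$, giving $L = +\infty \ge 1$; and a simple density attains its essential supremum on a set of positive measure, so this applies. For item (4) I would rewrite $F$ through the layer-cake (Cavalieri) formula: since $t\mapsto A^{-1}(\alpha t)$ is increasing and vanishes at $0$,
\[
\int A^{-1}\!\left(\alpha\frac{\widetilde q}{M}\right)d\mu = \int_0^1 \frac{d}{dt}A^{-1}(\alpha t)\;\mu\set{\widetilde q > tM}\,dt \ ,
\]
an integral of the tail function against the nonnegative weight $\frac{d}{dt}A^{-1}(\alpha t)$. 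Applying this to both $\widetilde q_1$ (with $M_1=\normat\infty{\widetilde q_1}$) and $q_2$ (with $M_2=\normat\infty{q_2}$), the hypothesis $\mu\set{\widetilde q_1 > t\normat\infty{\widetilde q_1}} \le \mu\set{q_2 > t\normat\infty{q_2}}$ yields $F_1(\alpha) \le F_2(\alpha)$ for every $\alpha$, hence $L_1 \le L_2$; since $\widetilde q_1$ is an escort density, $L_1 \ge 1$ by item (2), so $L_2 \ge 1$ and $q_2$ is an escort density as well. The only care needed here is the justification of the layer-cake identity and the sign of the weight, both immediate from the monotonicity of $A^{-1}$.
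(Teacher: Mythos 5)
Your proof is correct and takes essentially the same route as the paper's: the same auxiliary function $f(\alpha)=\int A^{-1}\left(\alpha\,\widetilde q/\normat \infty {\widetilde q}\right)d\mu$ drives items (2)--(4), with the same monotonicity argument for injectivity, the same lower bound $f(\alpha)\ge A^{-1}(\alpha)\,\mu\set{\widetilde q = \normat \infty {\widetilde q}}$ for item (3), and the same layer-cake comparison of the tail functions $t\mapsto\mu\set{q/\normat \infty {q} > t}$ for item (4). Your explicit monotone-convergence treatment of the borderline case $L=1$ (which the paper compresses into ``there exists $\alpha\le\normat \infty {A}$,'' implicitly allowing the endpoint) is a sound clarification; only note that $\mu\set{\widetilde q = \normat \infty {\widetilde q}}=0$ follows from the finiteness of $L$, not ``precisely when $L=1$,'' though your argument uses only the correct direction.
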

\begin{proof}
  \begin{enumerate}
  \item
  Let $\escortof{q_1} = \escortof{q_2}$ for $\mu$-almost all $x$. Say, $\int A\circ q_{1}\ d\mu \geq \int A\circ
q_{2}\ d\mu $. Then $A(q_{2}(x))\leq A(q_{1}(x))$, for $\mu$-almost all $x$. Since $A$ is strictly increasing, it follows $q_{2}(x)\leq q_{1}(x)$ for $\mu$-almost all $x$, which, in turn, implies $q_{1}=q_{2}$ $\mu$-a.s. because both $\mu $-integrals are equal to 1. Thus the escort mapping is a.s. injective.
  \item
    Fix a $\widetilde{q}\in \overline{\mathcal P}\cap L^{\infty}(\mu)$, and define the function
\begin{equation*}
  f(\alpha) = \int A^{-1}\left(\alpha \frac{\widetilde q}{\normat \infty {\widetilde q}}\right)\ d\mu, \quad \alpha \in [0,\left\Vert A\right\Vert_{\infty}[ \ .
\end{equation*}
It is finite, increasing, continuous and $f(0)=0$.
It is clear that the range condition ~\eqref{eq:rangecondition} is necessary because $\widetilde q = \escortof q$ implies $q = A^{-1}\left(\left(\int A(q)\ d\mu\right)\widetilde q\right)$ and, in turn, $1 =  \int A^{-1}\left(\left(\int A(q)\ d\mu\right)\widetilde q\right) \ d\mu$, given that $q$ is a probability density. If we take $\alpha = \int A(q)\ d\mu \ \normat \infty {\widetilde q} \le \normat \infty A$, the range condition is satisfied. Conversely, if the range condition holds, there exists $\alpha \le \normat \infty A$ such that $q = A^{-1}\left(\alpha \frac{\widetilde q}{\normat \infty {\widetilde q}}\right)$ is a positive probability density whose escort is $\widetilde q$. 
\item This is a special case of Item~\ref{item:XCM2}, in that
  \begin{equation*}
    f(\alpha)= \int A^{-1}\left(\alpha \frac{\widetilde q}{\normat \infty {\widetilde q}}\right)\ d\mu  \ge A^{-1}(\alpha)\mu\set{\widetilde q = \normat \infty {\widetilde q}} \ .
  \end{equation*}
  Therefore, $ f(\alpha) \uparrow +\infty$, as $\alpha \uparrow \normat \infty {A}$.  
\item For each bounded positive density $q$ we have
\begin{multline*}
 \int A^{-1}\left(\frac{q}{\normat \infty {q}}\right)\ d\mu = \int_0^{+\infty} \mu\set{\frac{q}{\normat \infty {q}} > A(t)} \ dt = \\ \int_0^{\normat \infty A} \mu\set{\frac{q}{\normat \infty {q}} > s} \frac1{A'\left(A^{-1}(s)\right)} \ ds \ . 
\end{multline*}
Now the necessary condition of Item~\ref{item:XCM3}. follows from Item~\ref{item:XCM1}. and our assumptions.
\qed
\end{enumerate}
\end{proof}

The previous proposition shows that the range of the escort mapping is uniformly dense as it contains all simple densities. Moreover, in the partial order induced by the rearrangement of the normalized density (that is for each $q$ the mapping $t \mapsto \mu\set{\frac q {\normat \infty q} \
> t}$), it contains the full right interval of each element. But the range of the escort mapping is not the full set of bounded positive densities, unless the $\sigma$-algebra $\mathcal X$ is generated by a finite partition. To provide an example, consider on the Lebesgue unit interval the densities $q_{\delta}(x) \propto (1 - x^{1/\delta})$, $\delta > 0$, and $A(x)=x/(1+x)$. The density $q_{\delta}$ turns out to be an escort if, and only if, $\delta \le 1$.

\subsection{Gradient of the normalization operator $K_p$}
Prop.~\ref{prop:Aexp} shows that the functional $K_{p}$ is a global solution
of an equation. We now study its local properties by the
implicit function theorem as well the related subgradients of the convex function 
$K_p$. We refer to \cite[Part I]{ekeland|temam:1999convex2nd} for the
general theory of convex functions in infinite dimension.

For every $u\in L^{1}(\mu )$, let us write 
\begin{equation}
q(u)=\exp _{A}(u-K_{p}(u)+\log _{A}p)
\end{equation}
while $\widetilde{q}(u) = \escortof{q(u)}$ denotes its escort density.

\begin{proposition}
\ \label{prop:subgradient}
\begin{enumerate}
\item \label{item:subgradient1} The functional $K_{p}\colon L^{1}(\mu )\rightarrow \mathbb{R}$ is
Gateaux-differentiable with derivative 
\begin{equation*}
\left. \frac{d}{dt}K_{p}(u+tv)\right\vert _{t=0}=\int v\widetilde{q}(u)\
d\mu \ .
\end{equation*}
It follows that $K_{p}\colon L^{1}(\mu )\rightarrow \mathbb{R}$ is monotone and globally Lipschitz.
\item For every $u,v\in L^{1}(\mu )$, the inequality 
\begin{equation*}
K_{p}(u+v)-K_{p}(u)\geq \int v\widetilde{q}(u)\ d\mu
\end{equation*}
holds, i.e., the density $\widetilde{q}(u)\in L^{\infty }(\mu )$ is the
unique subgradient of $K_{p}$ at $u$.
\end{enumerate}
\end{proposition}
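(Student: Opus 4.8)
The plan is to read off the derivative of $K_p$ by differentiating the defining mass-one constraint, and then to upgrade the resulting infinitesimal identity to the global subgradient inequality of part~(2) using the convexity of $K_p$ established in Proposition~\ref{prop:Aexp}. Thus part~(1) is the analytic core and part~(2) is a formal consequence of convexity once the Gateaux derivative is known.

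First I would fix $u,v\in L^1(\mu)$ and reduce to a scalar problem by introducing the function of the two real variables $t,k$ given by $F(t,k)=\int \exp_A(u+tv-k+\log_A p)\,d\mu$, so that the normalization $k(t):=K_p(u+tv)$ is characterized by $F(t,k(t))=1$ and is continuous in $t$ by Proposition~\ref{prop:Aexp}(7). Writing $q_{t,k}=\exp_A(u+tv-k+\log_A p)$, I would compute the partials by differentiating under the integral sign, which is legitimate because, by the Cauchy problem~\eqref{Aexp} and the bound~\eqref{eq:lip}, the candidate integrands $v\,A(q_{t,k})$ and $A(q_{t,k})$ are dominated by $\|A\|_\infty|v|\in L^1(\mu)$ and $\|A\|_\infty\in L^1(\mu)$ respectively (recall $\mu$ is finite). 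This gives $\partial_t F=\int v\,A(q_{t,k})\,d\mu$ and $\partial_k F=-\int A(q_{t,k})\,d\mu$, both continuous in $(t,k)$ by dominated convergence, with $\partial_k F<0$ since $A>0$ on $]0,\infty[$ and $q_{t,k}>0$. The implicit function theorem then makes $k$ of class $C^1$ with $k'(0)=-\partial_t F/\partial_k F=\left(\int v\,A(q)\,d\mu\right)/\left(\int A(q)\,d\mu\right)=\int v\,\widetilde q(u)\,d\mu$, which is the claimed derivative. Monotonicity is then immediate because $\widetilde q(u)\ge 0$, so the derivative is nonnegative in every nonnegative direction; and since $\widetilde q(u)$ is a probability density, integrating the derivative along the segment $s\mapsto u+sv$ gives $|K_p(u+v)-K_p(u)|\le\|v\|_\infty\int_0^1\!\int\widetilde q(u+sv)\,d\mu\,ds=\|v\|_\infty$, i.e. Lipschitz continuity in the uniform norm.

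For part~(2) I would invoke convexity of $K_p$ (Proposition~\ref{prop:Aexp}(7)). For a convex function the difference quotient $t\mapsto t^{-1}(K_p(u+tv)-K_p(u))$ is nondecreasing on $]0,1]$, so its value at $t=1$ dominates its limit as $t\downarrow 0$, which is exactly the derivative $\int v\,\widetilde q(u)\,d\mu$ just computed; this yields $K_p(u+v)-K_p(u)\ge\int v\,\widetilde q(u)\,d\mu$ for every $v$, so $\widetilde q(u)$ is a subgradient. Uniqueness is the standard consequence of Gateaux differentiability of a convex function: if $g\in L^\infty(\mu)$ is any subgradient, testing the defining inequality against $\pm tv$ and letting $t\downarrow 0$ sandwiches $\int v g\,d\mu=\int v\,\widetilde q(u)\,d\mu$ for all $v$, whence $g=\widetilde q(u)$.

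The hard part will be the regularity needed to run the implicit function theorem cleanly, namely verifying that $F$ is jointly $C^1$; this is precisely where the standing hypotheses $\|A\|_\infty<\infty$ and $\|A'\|_\infty<\infty$ together with finiteness of $\mu$ and dominated convergence do the work, paralleling the superposition-operator analysis of Proposition~\ref{prop:BBA}. A secondary delicate point is the Lipschitz constant: the local Lipschitz constant at $u$ is the dual norm of the derivative, namely $\|\widetilde q(u)\|_\infty=\|A(q(u))\|_\infty/\int A(q(u))\,d\mu$, so while the uniform-norm estimate above is automatic from $\int\widetilde q(u)\,d\mu=1$, obtaining a constant uniform in $u$ with respect to the $L^1$ norm hinges on bounding $\int A(q(u))\,d\mu$ away from zero, and this is the step that genuinely requires care.
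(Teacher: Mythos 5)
Your part~(1) coincides with the paper's proof: the same two--variable function $F(t,\kappa)=\int\exp_A(u+tv-\kappa+\log_A p)\,d\mu-1$, the same dominations $\vert\partial_t\vert\le\Vert A\Vert_\infty\vert v\vert$ and $\vert\partial_\kappa\vert\le\Vert A\Vert_\infty$ justifying differentiation under the integral, the observation that $\partial_\kappa F=-\int A(q_{t,\kappa})\,d\mu$ never vanishes, and the scalar implicit function theorem yielding $k'(0)=\int v\,\widetilde q(u)\,d\mu$. Part~(2) is where you genuinely diverge: the paper does \emph{not} invoke the convexity of $K_p$; it proves the subgradient inequality directly from the pointwise convexity of $\exp_A$, writing $\exp_A(u+v-K_p(u+v)+\log_A p)\geq q+A(q)\bigl(v-(K_p(u+v)-K_p(u))\bigr)$ with $q=q(u)$, integrating against $\mu$ (both sides have integral controlled by the normalization), and isolating the increment. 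Your route instead quotes convexity of $K_p$ from Proposition~\ref{prop:Aexp} and uses monotonicity of difference quotients of a convex function; this is correct and shorter, but it leans on the earlier result, whereas the paper's argument is self-contained at the level of the integrand and in effect re-derives the convexity. Your uniqueness argument (testing a putative subgradient against $\pm tv$ and letting $t\downarrow 0$) is the same standard step the paper summarizes by ``from Item~1 we deduce uniqueness.''

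On the Lipschitz claim your caution is not a defect but an accurate diagnosis, and here you are in fact more careful than the paper, whose proof disposes of it with ``since $\widetilde q(u)$ is positive and bounded.'' Pointwise boundedness of each $\widetilde q(u)$ only gives \emph{local} $L^1$-Lipschitz continuity (the map $u\mapsto\int A(q(u))\,d\mu$ is $L^1$-continuous and positive, so the gradient norm $\Vert\widetilde q(u)\Vert_\infty=\Vert A(q(u))\Vert_\infty/\int A(q(u))\,d\mu$ is locally bounded). A \emph{global} Lipschitz constant with respect to $\Vert\cdot\Vert_{L^1(\mu)}$ would require exactly what you identified, $\inf_u\int A(q(u))\,d\mu>0$, and for nonatomic $\mu$ this fails: densities of the form $q_n\approx n$ on a set of measure $1/n$ and $q_n\approx 1/n$ elsewhere belong to the family (their $\log_A q_n-\log_A p$ is in $L^1(\mu)$ by Proposition~\ref{prop:Aexp}, Item~4) and satisfy $\int A(q_n)\,d\mu\to 0$, hence $\Vert\widetilde q(u_n)\Vert_\infty\to\infty$, so the dual norms of the Gateaux derivatives blow up. What your argument (integrating the derivative along segments, or equivalently monotonicity plus the translation identity $K_p(u+c)=K_p(u)+c$) does prove globally is $1$-Lipschitz continuity in the sup-norm. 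So your proposal establishes everything that is actually provable by these means, and the one point you leave open is a soft spot in the paper's own one-line justification rather than a gap in your reasoning.
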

\begin{proof}
\begin{enumerate}
\item Consider the equation 
\begin{equation*}
F(t,\kappa )=\int \exp _{A}(u+tv-\kappa +\log _{A}p)\ d\mu -1=0,\quad t,\kappa
\in \mathbb{R} \ ,
\end{equation*}
so that $\kappa = K_p(u+tv)$.  Derivations under the integral hold by virtue of the bounds 
\begin{multline*}
\left\vert \frac{\partial }{\partial t}\exp _{A}(u+tv-\kappa +\log
_{A}p)\right\vert = \\
\left\vert A(\exp _{A}(u+tv-\kappa +\log _{A}p))v\right\vert \leq \left\Vert
A\right\Vert _{\infty }\left\vert v\right\vert
\end{multline*}
and 
\begin{equation*}
\left\vert \frac{\partial }{\partial \kappa }\exp _{A}(u+tv-\kappa +\log
_{A}p)\right\vert = \\
\left\vert A(\exp _{A}(u+tv-\kappa +\log _{A}p))\right\vert \leq \left\Vert
A\right\Vert _{\infty }\ .
\end{equation*}
Furthermore, the partial derivative with respect to $\kappa$ is never zero. Thanks to the implicit function theorem, there exists the derivative $\left( d\kappa /dt\right) _{t=0}$
which is the desired Gateaux derivative. Since $\widetilde q(u)$ is positive and bounded, $K_p$ is monotone and globally Lipschitz.
\item Thanks to the convexity of $\exp _{A}$ and the derivation formula, we
have 
\begin{equation*}
\exp _{A}(u+v-K_{p}(u+v)+\log _{A}p)\geq q+A(q)(v-(K_{p}(u+v)-K_{p}(v)))\ ,
\end{equation*}
where $q = \exp_A(u - K_p(u) + \log_A p)$.
If we take $\mu $-integral of both sides, 
\begin{equation*}
0\geq \int vA(q)\ d\mu -(K_{p}(u+v)-K_{p}(v))\int A(q)\ d\mu \ .
\end{equation*}
Isolating the increment $K_{p}(u+v)-K_{p}(v)$, the desired inequality
obtains. Therefore, $\widetilde{q}(u)$ is a subgradient of $K_{p}$ at $u$.
From Item~\ref{item:subgradient1}. we deduce that $\widetilde{q}(u)$ is the unique subgradient and further $\widetilde{q}(u)$ is the Gateaux differential of $K_{p}$ at $u$. \qed
\end{enumerate}
\end{proof}

We can also establish Fr\'echet-differentiability of the functional, under more stringent assumptions.

\begin{proposition}
\label{prop:FAZ} Let $\alpha \geq 2.$
\begin{enumerate}
\item The superposition operator 
\begin{equation*}
L^{\alpha }(\mu )\ni v\mapsto \exp _{A}(v+\log _{A}p)\in L^{1}(\mu )
\end{equation*}
is continuously Fr\'{e}chet differentiable with derivative 
\begin{equation*}
d\exp _{A}(v)=(h\mapsto A(\exp _{A}(v+\log _{A}p))h)\in \mathcal{L}
(L^{\alpha }(\mu ),L^{1}(\mu ))\ .
\end{equation*}
\item The functional $K_{p}:L^{\alpha }(\mu )\rightarrow \mathbb{R}$,
implicitly defined by the equation 
\begin{equation*}
\int \exp _{A}(v-K_{p}(v)+\log _{A}p)\ d\mu =1,\quad v\in L^{\alpha }(\mu )
\end{equation*}
is continuously Fr\'{e}chet differentiable with derivative 
\begin{equation*}
dK_{p}(v)=(h\mapsto \int h\widetilde{q}(v)\ d\mu )\ ,
\end{equation*}
where $\widetilde q(u) = \escortof{q(u)}$.
\end{enumerate}
\end{proposition}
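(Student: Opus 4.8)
The plan is to derive both items from Proposition~\ref{prop:BBA} and the Banach-space implicit function theorem, with essentially all the work concentrated in the first item. For item~1 I would first record that the map lands in $L^1(\mu)$: applying the inequality $\exp_A(x+y)\le\normat\infty A\,x^++\exp_A(y)$ established above with $y=\log_A p$ gives $\exp_A(v+\log_A p)\le\normat\infty A\,v^++p$, which is integrable for every $v\in L^\alpha(\mu)\subset L^1(\mu)$. The crucial point is that the remainder estimate in the proof of Proposition~\ref{prop:BBA}(2) is \emph{uniform in the base point}, since the integrand $\absoluteval{A(\exp_A(u+rh))-A(\exp_A(u))}$ is bounded by $2\normat\infty A$ always and by $\normat\infty{A'}\normat\infty A\,\delta$ on $\set{\absoluteval h\le\delta}$, with constants that do not involve $u$. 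Taking $u=v+\log_A p$ and $\beta=1$ (so that the Hölder exponent $\gamma=\alpha/(\alpha-1)$ is finite because $\alpha\ge2>1$), the same splitting-plus-Chebyshev argument shows
$$\frac{\normat 1{\exp_A(v+h+\log_A p)-\exp_A(v+\log_A p)-A(\exp_A(v+\log_A p))h}}{\normat\alpha h}\longrightarrow 0$$
as $\normat\alpha h\to0$, which is Fréchet differentiability with the stated derivative.

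It then remains to prove that $v\mapsto d\exp_A(v)$ is \emph{continuous}, and this is precisely where the hypothesis $\alpha\ge2$ is used. Writing the derivative at $v$ as the multiplication operator $T_vh=A(\exp_A(v+\log_A p))h$, Hölder's inequality with conjugate exponents $\alpha$ and $\alpha'=\alpha/(\alpha-1)$ yields the operator-norm bound
$$\normat{\mathcal L(L^\alpha(\mu),L^1(\mu))}{T_{v_1}-T_{v_2}}\le\normat{\alpha'}{A(\exp_A(v_1+\log_A p))-A(\exp_A(v_2+\log_A p))}.$$
Since $A$ and $\exp_A$ are Lipschitz with constants $\normat\infty{A'}$ and $\normat\infty A$ respectively, the right-hand side is at most $\normat\infty{A'}\normat\infty A\,\normat{\alpha'}{v_1-v_2}$; and because $\alpha\ge2$ forces $\alpha'\le\alpha$, the finite-measure embedding $L^\alpha(\mu)\hookrightarrow L^{\alpha'}(\mu)$ controls this by a constant multiple of $\normat\alpha{v_1-v_2}$. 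Hence $v\mapsto T_v$ is (Lipschitz) continuous and the superposition operator is of class $C^1$.

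For item~2 I would set $F(v,\kappa)=\int\exp_A(v-\kappa+\log_A p)\ d\mu-1$ and invoke the implicit function theorem on $L^\alpha(\mu)\times\reals$. The map $(v,\kappa)\mapsto v-\kappa$ is continuous and affine from $L^\alpha(\mu)\times\reals$ into $L^\alpha(\mu)$ (constants lie in $L^\alpha(\mu)$ as $\mu$ is finite), so composing item~1 with the bounded linear functional $\int\cdot\ d\mu$ shows $F$ is $C^1$. The partial derivative $\partial_\kappa F(v,\kappa)=-\int A(q)\ d\mu$, with $q=\exp_A(v-\kappa+\log_A p)$, is strictly negative because $A>0$ on $]0,+\infty[$, hence is a toplinear isomorphism of $\reals$. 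The implicit function theorem then gives that $K_p$ is $C^1$, and differentiating the identity $F(v,K_p(v))=0$ produces
$$\int A(q)\,h\ d\mu-\left(\int A(q)\ d\mu\right)dK_p(v)[h]=0,$$
whence $dK_p(v)[h]=\int h\,A(q)\big/\!\int A(q)\ d\mu\ d\mu=\int h\,\widetilde q(v)\ d\mu$, the asserted formula (consistent with the Gateaux derivative of Proposition~\ref{prop:subgradient}). The main obstacle is the continuity of the derivative in item~1: the pointwise and Fréchet estimates go through for any $\alpha>1$, but the $C^1$ regularity requires the embedding $\alpha'\le\alpha$, which is exactly the condition $\alpha\ge2$; once this is in place, item~2 is a routine application of the implicit function theorem.
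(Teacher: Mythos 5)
Your proposal is correct and takes essentially the same route as the paper: item~1 is obtained from Proposition~\ref{prop:BBA} with $\beta=1$, followed by the H\"older estimate with conjugate exponent $\sigma=\alpha/(\alpha-1)$ and the Lipschitz bound $\left\Vert A'\right\Vert_\infty\left\Vert A\right\Vert_\infty$, where the embedding requirement $\sigma\le\alpha$ is exactly the hypothesis $\alpha\ge2$, and item~2 is the Banach-space implicit function theorem applied to $(v,\kappa)\mapsto\int\exp_A(v-\kappa+\log_A p)\,d\mu$ with nonvanishing $\kappa$-derivative $-\int A(q)\,d\mu<0$. Your explicit observation that the remainder estimate in the proof of Proposition~\ref{prop:BBA}(2) is uniform in the base point is a worthwhile refinement rather than a deviation: since $\log_A p$ need not belong to $L^\alpha(\mu)$, the shifted map is not literally a special case of the stated proposition, and this uniformity is precisely what makes the paper's terse ``we get easily the assertion'' rigorous.
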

\begin{proof}
\begin{enumerate}
\item Setting $\beta =1$ in Prop.~\ref{prop:BBA}, we
get easily the assertion. It remains just to check that the Fr\'{e}chet
derivative is continuous, i.e., that the Fr\'echet derivative is a continuous
map $L^{\alpha }(\mu )\rightarrow \mathcal{L}(L^{\alpha }(\mu ),L^{1}(\mu ))$. If $\Vert {h}\Vert _{L^{\alpha }(\mu )}\leq 1$ and $v,w\in L^{\alpha }(\mu
)$ we have 
\begin{multline*}
\int \left\vert {(A[\exp _{A}(v+\log _{A}p)]-A[\exp _{A}(w+\log _{A}p)])h}
\right\vert \ d\mu \\
\leq \Vert {A[\exp _{A}(v+\log _{A}p)-A[\exp _{A}(w+\log _{A}p)]}\Vert
_{L^{\sigma }(\mu )}\ ,
\end{multline*}
where $\sigma =\alpha /\left( \alpha -1\right) $ is the conjugate exponent of $
\alpha $. On the other hand,
\begin{eqnarray*}
&&\Vert {A[\exp _{A}(v+\log _{A}p)-A[\exp _{A}(w+\log _{A}p)]}\Vert
_{L^{\sigma }(\mu )} \\
&\leq &\left\Vert A^{\prime }\right\Vert _{\infty }\left\Vert A\right\Vert
_{\infty }\left\Vert v-w\right\Vert _{L^{\sigma }(\mu )}
\end{eqnarray*}
and so the map $L^{\alpha }(\mu )\rightarrow \mathcal{L}(L^{\alpha }(\mu
),L^{1}(\mu ))$ is continuous whenever $\alpha \geq \sigma ,$ i.e., $\alpha
\geq 2$.

\item Fr\'echet differentiability of $K_{p}$ is a consequence of the
Implicit Function Theorem in Banach spaces, see \cite{dieudonne:60}, applied
to the $C^{1}$-mapping 
\begin{equation*}
L^{\alpha }(\mu )\times \mathbb{R}\ni (v,\kappa )\mapsto \int \exp
_{A}(v-\kappa +\log _{A}p)\ d\mu \ .
\end{equation*}
The value of the derivative is given by Prop.~\ref{prop:subgradient}. \qed
\end{enumerate}
\end{proof}

\section {Deformed divergence}
\label{sec:convex-conjugate}
In analogy with the standard exponential case, define the $A$-divergence between
probability densities as
\begin{equation*}
D_{A}(q\Vert p)=\int \left( \log _{A}q-\log _{A}p\right) \escortof{q}\text{ }
d\mu \text{, \ for }q,p\in \mathcal{P} \ .
\end{equation*}

Since $\log_A$ is strictly concave with derivative $1/A$, we have
\begin{equation*}
\log _{A}\left( x\right) \leq \log _{A}\left( y\right) +\frac{1}{A\left(
y\right) }\left( x-y\right)
\end{equation*}
for all $x,y>0$ and with equality if, and only if, $x=y.$ Hence 
\begin{equation}\label{eq:HSA}
A\left( y\right) \left( \log _{A}\left( y\right) -\log _{A}\left( x\right)
\right) \geq y-x\ .  
\end{equation}
It follows in particular that $D_{A}(\cdot \Vert \cdot )$ is a well defined, possibly extended valued, function.

Observe further that by Prop.~\ref{prop:Aexp}.\ref{item:Aexp4}, $\log _{A}q-\log _{A}p \in L^{1}\left( \mu \right) $, and so 
$D_{A}(q\Vert p)<\infty $, whenever $q = q(u)$.

The binary relation $D_{A}$ is a faithful divergence in that it satisfies the following Gibbs' inequality.

\begin{proposition}\label{prop:gibbs}
It holds $D_{A}(q\Vert p)\geq 0$ and $D_{A}(q\Vert p)=0$ if and only if $p=q$.
\end{proposition}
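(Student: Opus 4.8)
The plan is to reduce everything to the pointwise inequality \eqref{eq:HSA}, which already encodes both the sign and the equality case, and then to integrate. First I would unfold the definition of the escort density and write
\begin{equation*}
D_{A}(q\Vert p) = \frac1{c}\int \left(\log_A q - \log_A p\right) A(q)\ d\mu, \qquad c = \int A(q)\ d\mu \ ,
\end{equation*}
noting that $c$ is a \emph{strictly positive, finite} constant because $0 < A(q) \le \normat\infty A$ on the set where $q > 0$, which is $\mu$-almost everywhere since $q\in\mathcal P$. Thus the sign of $D_A(q\Vert p)$ coincides with the sign of the unnormalized integral $\int (\log_A q - \log_A p) A(q)\ d\mu$, and I can work with the latter.

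Next I would invoke \eqref{eq:HSA} with $y = q(x)$ and $x \mapsto p(x)$, which gives the pointwise bound
\begin{equation*}
A(q)\left(\log_A q - \log_A p\right) \ge q - p \quad \text{$\mu$-a.s.,}
\end{equation*}
with equality at a point if and only if $q = p$ there. The key integrability observation is that the negative part of the left-hand side is dominated by $p$: indeed $A(q)(\log_A q - \log_A p) \ge q - p \ge -p$, so its negative part is at most $p \in L^1(\mu)$. Hence the integral $\int A(q)(\log_A q - \log_A p)\ d\mu$ is well defined in $(-\infty,+\infty]$, and integrating the pointwise inequality yields
\begin{equation*}
\int A(q)\left(\log_A q - \log_A p\right)\ d\mu \ge \int (q - p)\ d\mu = 1 - 1 = 0 \ ,
\end{equation*}
since $p$ and $q$ are probability densities. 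Dividing by $c > 0$ gives $D_A(q\Vert p) \ge 0$ (possibly $+\infty$), which settles the first assertion.

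For the equality case, the converse direction is immediate: $p = q$ makes the integrand vanish identically, so $D_A(q\Vert q) = 0$. Conversely, suppose $D_A(q\Vert p) = 0$. Consider the nonnegative function $g = A(q)(\log_A q - \log_A p) - (q - p)$, whose integral equals $c\, D_A(q\Vert p) - 0 = 0$; being nonnegative with zero integral, $g = 0$ $\mu$-a.s., and the sharp equality clause of \eqref{eq:HSA} forces $q = p$ $\mu$-a.s. I expect the only genuinely delicate point to be this integrability/well-definedness bookkeeping when $D_A$ is a priori extended-valued — the domination of the negative part by $p$ is what licenses integrating the inequality term by term; the inequality \eqref{eq:HSA} itself, including its equality condition, does all the substantive work and is already available.
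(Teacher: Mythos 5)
Your proof is correct and follows essentially the same route as the paper's: integrate the pointwise inequality \eqref{eq:HSA} with $y=q$, $x=p$ against $\mu$ and divide by the normalizing constant $\int A(q)\,d\mu>0$. The only difference is that you make explicit two points the paper leaves implicit --- the domination of the negative part of the integrand by $p\in L^1(\mu)$, which justifies integrating the extended-valued inequality, and the zero-integral argument reducing the equality case to the pointwise equality clause of \eqref{eq:HSA} --- both of which are sound refinements rather than a different approach.
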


\begin{proof}
From inequality \eqref{eq:HSA} it follows
\begin{align*}
D_{A}(q\Vert p) &=\frac{1}{\int A\left( q\right) d\mu }\int \left( \log
_{A}q-\log _{A}p\right) A\left( q\right) \ d\mu \\
&\geq \frac{1}{\int A\left( q\right) d\mu }\int \left( q-p\right) \text{ } \ d\mu =0.
\end{align*}
Moreover, equality holds if and only if $p=q$ $\mu $-a.e. \qed
\end{proof}

There are other alternative definitions that may fully candidate to be a divergence measure. For instance: 
\begin{equation*}\
I_{A}(q\Vert p)=-\int \log_A(p/q) q \ d\mu.
\end{equation*}
or also
\begin{equation*}\
\widetilde{D}_{A}(q\Vert p)=\int A(q/p)\log_A(p/q) p \ d\mu.
\end{equation*}
By means of the concavity of $\log_A$, it is not difficult to check that both satisfy Gibbs' condition of Prop.~\ref{prop:gibbs}, as well as they equal the Kullback-Leibner functional in the non-deformed case. Observe further that the functional $I_{A}(q\Vert p)$ is closely related to Tallis' divergence (see \cite{tsallis:1988} and also \cite {loaiza|quiceno:2013-JMAA}). In fact, if one replaces $\log_A$ with the q-logarithm, one gets just Tallis' q-divergence.

However our formulation for the divergence is motivated by the structure of the deformed exponential representation. As it will be now seen, our definition of divergence is more adapted to the present setting and it turns out be closely related to the normalizing operator. 

In the equation
\begin{equation} \label{eq:expmodel1}
q = \exp _{A}(u-K_p(u)+\log _{A}p),\quad u\in L^{1}(\mu )\ , \ q \in \mathcal P \ ,
\end{equation}
the random variable $u$ is identified up to an additive constant for any fixed density $q$.
There are at least two options for selecting an interesting representative member
in the equivalence class.

One option is to impose the further condition $\int u\widetilde{p}\ d\mu =0$, where $\widetilde p = \escortof p$, the integral being well defined, given that the escort density is bounded. This restriction provides a unique element $u_q$. On the other hand, if we solve Eq.~\eqref{eq:expmodel1} with respect to $u-K(u)$, we get the desired relation: 
\begin{equation}\label{eq:QMP}
K_{p}(u_q)={\Expectation}_{\widetilde{p}}\left[ \log _{A}p-\log _{A}q\right]
=D_{A}(p\Vert q),  
\end{equation}
where $u=u_q$ is uniquely characterized by the two equations: ${\Expectation}_{\widetilde{p}}\left[ u\right] =0$ and $q=\exp _{A}(u-K_p(u)+\log _{A}p)$.

Observe further that Eq.~\eqref{eq:QMP} entails the relation 
\begin{equation*}
K_{p}(u) = D_{A}(p\Vert q(u))\quad \forall u \in L^{1}\left( \mu \right). 
\end{equation*}
The previous choice is that followed in the construction of the non-parametric exponential manifold, see \cite{pistone|sempi:95,pistone|rogantin:99}.

With regard to the non-deformed case, Eq.~\eqref{eq:QMP} yields the Kulback-Leibler divergence with $p$ and $q$ exchanged, with respect to what is considered more natural in Statistical Physics, see for example the comments \cite{landau|lifshits:1980}.

For this purpose, we undertake another choice for the random variable in the equivalence class. More specifically, in Eq.~\eqref{eq:expmodel1} the random variable $u$ will be now centered with respect to $\widetilde{q} = \escortof q$, i.e., ${\Expectation}_{\widetilde{q}}\left[ u\right] =0$. 

To avoid confusion let us rewrite Eq.~\eqref{eq:expmodel1} as follows and where for convenience the function $K_p$ is replaced with $H_p=-K_p$ : \begin{equation}
q=\exp _{A}(v+H_{p}(v)+\log _{A}p), \quad v\in L^{1}(\mu ),\quad {\Expectation}_{
\widetilde{q}}\left[ v\right] =0,  \label{eq:expmodel2}
\end{equation}
so that 
\begin{equation*}
H_{p}(v_q) ={\Expectation}_{\widetilde{q}}\left[ \log _{A}q-\log _{A}p\right]
=D_{A}(q\Vert p),
\end{equation*}
where $v=v_q$ is the solution to the two equations   ${\Expectation}_{\widetilde{q}}\left[ v\right] =0$ and 
$q=\exp _{A}(v+H_{p}(v)+\log _{A}p)$.
There are hence two notable representations of the same probability density $q$: 
\begin{equation*}
q=\exp_A(u - K_p(u) + \log_A p) = \exp_A(v + H_p(v) + \log_A p)
\end{equation*}
which implies $u_q - v_q = K_p(u_q) + H_p(v_q)$. This, in turn, leads to 
\begin{equation*} 
- {\Expectation}_{\widetilde p}\left[v_q \right] = {\Expectation}_{\widetilde q}\left[u_q \right] =
K_p(u_q)+H_p(v_q)=K_p(u_q)-K_p(v_q).
\end{equation*}
This provides the following remarkable relation
\begin{equation}  \label{eq:conjugate}
H_p(v_q)={\Expectation}_{\widetilde q}\left[u_q \right]\ -\ K_p(u_q).
\end{equation}

\subsection{Variational formula}
We now present a variational formula in the spirit of the classical one by
Donsker-Varadhan. Next proposition provides 
the convex
conjugate of $K_{p}$, in the duality $L^{\infty }(\mu )\times L^{1}(\mu )$.

In what follows, the operator $\eta \mapsto \hat{\eta}$ denotes
the inverse of the escort operator, i.e., $\eta = \escortof {\hat \eta}$. In the light of the results established in Sec.~\ref{sec:escortdensity}, this operator maps a dense subset of $\overline{\mathcal P}\cap L^{\infty}(\mu)$ onto $\overline{\mathcal P}$.  

\begin{proposition}
\begin{enumerate}
\item The convex conjugate function of $K_{p}$ :
\begin{equation}\label{eq:FEN}
K_{p}^{\ast }\left( w\right) =\sup_{u\in L^{1}(\mu )}\left(\int wu\ d\mu
-K_{p}\left( u\right)\right), \quad w\in L^{\infty }(\mu )  
\end{equation}
has domain contained into $\overline{\mathcal P}\cap L^{\infty}(\mu)$. More precisely,
\begin{equation*}
\escortof{\mathcal P}\subseteq domK_{p}^{\ast }\subseteq \overline{\mathcal P}\cap L^{\infty}(\mu).
\end{equation*}

\item 
$K_{p}^{\ast }\left( w\right) \geq 0$ for all $w \in L^{\infty}(\mu)$. For any $\eta \in \escortof{\mathcal P}$,  the conjugate $K_p^*(\eta)$ is given by the \emph{Legendre transform}:
  \begin{equation*}
K_p^*(\eta) = \int \eta\ u_{\hat \eta} \ d\mu - K_p(u_{\hat \eta}) \  .    
\end{equation*}
So that \quad  $K_p^*(\eta)  = H_p(v_{\hat \eta}) = D_A(\hat \eta\Vert p)$ ; equivalently:
 \begin{equation*}
K_p^*(\escortof q)  = D_A(q\Vert p) \quad \forall p,q \in L^1 (\mu ) .   
\end{equation*}

\item It holds the inversion formula
	\begin{align*} 
K_{p}\left( u\right) =\max_{\eta \in  \escortof {\mathcal P} }\left(\int \eta u\ d\mu
- D_A(\hat \eta \Vert p) \right) \\ 
= \max_{q\in \mathcal{P}}  \left(\int \escortof q u\ d\mu
- D_A(q\Vert p) \right),\quad \forall u \in L^{1}(\mu ). 
\end{align*}
\end{enumerate}
\end{proposition}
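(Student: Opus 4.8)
The plan is to treat this as the Fenchel--Legendre duality attached to the convex functional $K_p$, exploiting three structural facts already established: $K_p$ is finite, convex and continuous on $L^{1}(\mu)$ (Prop.~\ref{prop:Aexp}), it is Gateaux-differentiable with gradient $dK_p(u)=\widetilde q(u)=\escortof{q(u)}$ (Prop.~\ref{prop:subgradient}), and it is translation-covariant with $K_p(u+c)=K_p(u)+c$ (since $K_p(1)=1$) and monotone, with $K_p(0)=0$. Parts (1)--(2) pin down $K_p^{\ast}$ explicitly, and Part (3) is the biconjugate statement obtained from Fenchel--Moreau.

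For the domain (Part 1, upper inclusion) I would first test the supremum in \eqref{eq:FEN} along the constant shifts $u\mapsto u+c$. Since $\int w(u+c)\,d\mu-K_p(u+c)=\bigl(\int wu\,d\mu-K_p(u)\bigr)+c\bigl(\int w\,d\mu-1\bigr)$, letting $c\to\pm\infty$ forces $\int w\,d\mu=1$ on $\operatorname{dom}K_p^{\ast}$. Next, to force $w\ge 0$, I would plug $u=-t\mathbf 1_{\{w<0\}}$ with $t>0$: monotonicity gives $K_p(u)\le K_p(0)=0$, while $\int wu\,d\mu=t\int_{\{w<0\}}\absoluteval w\,d\mu\to+\infty$, so the supremum is $+\infty$ unless $w\ge 0$ a.e. Together these give $\operatorname{dom}K_p^{\ast}\subseteq\overline{\mathcal P}\cap L^{\infty}(\mu)$; the reverse inclusion $\escortof{\mathcal P}\subseteq\operatorname{dom}K_p^{\ast}$ falls out of the finite values computed in Part 2.

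For Part 2 the nonnegativity $K_p^{\ast}(w)\ge 0$ is immediate by taking $u=0$ and using $K_p(0)=0$. The substantive point is the value at $\eta=\escortof q\in\escortof{\mathcal P}$. Because $\eta$ is exactly the gradient of $K_p$ at the representative $v_q$ centred so that $\expectat{\widetilde q}{v_q}=0$ (here $\widetilde q=\escortof q=\eta$), the subgradient inequality of Prop.~\ref{prop:subgradient} shows the supremum in \eqref{eq:FEN} is attained at $v_q$; evaluating there and using $\int\eta v_q\,d\mu=\expectat{\widetilde q}{v_q}=0$ gives $K_p^{\ast}(\eta)=-K_p(v_q)=H_p(v_q)=D_A(\hat\eta\Vert p)$, invoking the identity $H_p(v_q)=D_A(q\Vert p)$ from \eqref{eq:expmodel2}. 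Representative-independence---valid since $\int\eta\,d\mu=1$ and $K_p(\cdot+c)=K_p(\cdot)+c$---then reproduces the stated Legendre form $K_p^{\ast}(\eta)=\int\eta\,u_{\hat\eta}\,d\mu-K_p(u_{\hat\eta})$, consistent with \eqref{eq:conjugate}.

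Finally, Part 3 is Fenchel--Moreau biconjugation. Since $K_p$ is convex and norm-continuous on $L^{1}(\mu)$, it is lower semicontinuous in the pairing $\langle L^{1}(\mu),L^{\infty}(\mu)\rangle$, so $K_p=K_p^{\ast\ast}$, i.e. $K_p(u)=\sup_{w\in L^{\infty}(\mu)}\bigl(\int wu\,d\mu-K_p^{\ast}(w)\bigr)$. Restricting the supremum to $\operatorname{dom}K_p^{\ast}\subseteq\overline{\mathcal P}\cap L^{\infty}(\mu)$ and substituting $K_p^{\ast}(\eta)=D_A(\hat\eta\Vert p)$ from Part 2 yields the two displayed expressions, and the supremum is a maximum because the choice $\eta=\escortof{q(u)}$ attains equality (this is precisely where $dK_p(u)=\escortof{q(u)}$ is used). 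The main obstacle I expect is twofold: carrying out the $D_A$-identification cleanly requires committing to the $\widetilde q$-centred representative $v_q$ rather than $u_q$, and the inversion formula as a genuine maximum over $\mathcal P$ needs the domain of $K_p^{\ast}$ to sit inside $\escortof{\mathcal P}$ together with the attainment above; one must also confirm $D_A(q\Vert p)<\infty$ exactly for the escorts arising from the model (i.e. when $\log_A q-\log_A p\in L^{1}(\mu)$), which is what makes the lower inclusion in Part 1 and the finiteness in Part 2 consistent.
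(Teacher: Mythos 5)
Your proposal is correct and follows essentially the same route as the paper's proof: monotonicity plus translation invariance of $K_p$ pin $\operatorname{dom}K_p^{\ast}$ inside $\overline{\mathcal P}\cap L^{\infty}(\mu)$, the first-order condition from Prop.~\ref{prop:subgradient} shows the supremum is attained where $\escortof{q(u)}=\eta$ and yields $K_p^{\ast}(\escortof q)=D_A(q\Vert p)$, and the equality case of the Fenchel--Young inequality (equivalently your biconjugation-plus-attainment argument) gives the inversion formula. The only cosmetic deviations are that you test nonnegativity with $u=-t\mathbf{1}_{\{w<0\}}$ where the paper uses the cone generated by $-\chi_C$, and you evaluate the conjugate directly at the $\widetilde q$-centred representative $v_{\hat\eta}$ instead of at $u_{\hat\eta}$ followed by Eq.~\eqref{eq:conjugate}; you even flag, as the paper implicitly assumes, that $\hat\eta$ must be representable in the model, i.e.\ $\log_A\hat\eta-\log_A p\in L^1(\mu)$.
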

\begin{proof}
\begin{enumerate} 
\item It follows from the fact that $K_{p}$ is monotone and translation invariant. Let us first suppose $w\notin L^{\infty }_+(\mu )$. That means that
  \begin{equation*}
 \int w \chi_C  \ d\mu < 0    
\end{equation*}
is true for some indicator function $\chi_C$. If we consider the cone generated by the function $-\chi_C$, we can write
\begin{equation*}
K_{p}^{\ast }\left( w\right) \geq \sup_{u\in\ cone(-\chi_C)}\left(\int wu\ d\mu -K_{p}\left(
u\right)\right) \geq \sup_{u\in\ cone(-\chi_C)}\int wu\ d\mu = +\infty,
\end{equation*}
since $K_{p}\left( u\right) \leq 0$ when $u\in\ cone(-\chi_C)$. Now consider the case in which $w \geq 0$. If we set $u = \lambda \in \reals$, we have $K_p(\lambda) = \lambda$ and consequently 
\begin{equation}\label{eq:AAA}
K_p^*(w) \geq \sup_{\lambda \in \reals}\left( \lambda \int w\ d\mu
-\lambda \right) \ .
\end{equation}
This $\sup$ is $+\infty$, unless $\int w \ d\mu = 1$.
Hence, $K_{p}^{\ast }\left( w\right) <\infty $ implies $w\in \overline{\mathcal{P}}$. Summarizing, the domain of $K_{p}^{\ast }$ is contained into $\overline{\mathcal P}\cap L^{\infty}(\mu)$, and this proves one of the two claimed inclusions. The other one will be a direct consequence of the next point.

\item Eq.~\eqref{eq:AAA} implies $K_{p}^{\ast }\geq 0$.  By Prop.~\ref{prop:subgradient} the concave and Gateaux differentiable function $u \mapsto \int \eta u \ d\mu - K_p(u)$ has derivative at $u$ given by $\eta - dK_p(u) = \eta - \escortof{q(u)}$, where $q(u) = \exp_A(u - K_p(u) + \log_Ap)$. Under our assumptions, the derivative vanishes at $u=u_{\hat \eta}$ and the $\sup$ in the definition of $K_p^*$ is attained at that point. The maximum value is $K_p^*(\eta) = \int \eta u \ d\mu - K_p(u)$, by setting $u=u_{\hat \eta}$.

The last formula follows straightforward from Eq.~\eqref{eq:conjugate}.

\item For a well-known property of Fenchel-Moreau duality theory, we have:
\begin{align*}
K_{p}\left( u\right) \geq \int wu\ d\mu - K_p^*(w) \quad \forall u \in L^1 (\mu ), \quad \forall w \in L^{\infty }(\mu ) \\
K_{p}\left( u\right) = \int wu\ d\mu - K_p^*(w) \iff w \in \partial K_{p}\left( u\right).   
\end{align*}
Clearly in our case $\partial K_{p}\left( u\right)$ is a singleton and the image of $\partial K_{p}$ is the set 
$\escortof{\mathcal P}$. Therefore
\begin{equation*}
K_{p}\left( u\right) = \max_{w \in  \escortof {\mathcal P} }\left(\int wu\ d\mu - K_p^*(w) \right).
\end{equation*}
By Item 2 the desired inversion formula obtains.
\qed
\end{enumerate}
\end{proof}

\section{Hilbert bundle based on $\exp_A$}
\label{sec:riem-manif-based}

We shall introduce the Hilbert manifold of probability densities as defined in \cite{newton:2012,newton:2016}. A slightly more general set-up will be introduced, than the one used in that references. By means of a general $A$ function, we provide an atlas of charts, and define a linear bundle as an expression of the tangent space.

Let $\mathcal{P}(\mu )$ denote the set of all $\mu $-densities on the
probability space $(\mathbb{X},\mathcal{X},\mu )$ of the kind 
\begin{equation}\label{eq:Pmu}
q=\exp _{A}(u-K_{1}(u)),\quad u\in L^{2}(\mu ),\quad {\Expectation}_{\mu }\left[ u
\right] =0 \ .
\end{equation}
Notice that $1\in \mathcal{P}(\mu )$ because we can 
take $u=0$.

\begin{proposition}
  \begin{enumerate}
  \item $\mathcal{P}(\mu )$ is the set of all densities $q$ such that $\log _{A}q\in L^{2}(\mu )$, in which case $u = \log_A q - \expectat \mu {\log_A q}$.
  \item If in addition $A'(0+)>0$, then $\mathcal{P}(\mu )$ is the set of all densities $q$ such that both $q$ and $\log q$ are in $L^{2}(\mu )$. 
  \item Let $A'(0+) > 0$. On a product space with reference probability measures $\mu_1$ and $\mu_2$, and densities respectively $q_1$ and $q_2$. We have $q_1 \in \mathcal P(\mu_1)$ and $q_2 \in \mathcal P(\mu_2)$ if, and only if, $q_1 \otimes q_2 \in \mathcal P(\mu_1 \otimes \mu_2)$. 
  \end{enumerate}
\end{proposition}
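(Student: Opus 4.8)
The plan is to prove the three claims in order, leaning on the characterization of $\mathcal P(\mu)$ via the chart $u = \log_A q - \expectat \mu {\log_A q}$ and on the bounds \eqref{eq:bound1}--\eqref{eq:bound2} for $\log_A$. For Item~1, observe that if $q = \exp_A(u - K_1(u))$ with $u \in L^2(\mu)$ and $\expectat \mu u = 0$, then $\log_A q = u - K_1(u)$, and since $K_1(u)$ is a constant, $\log_A q \in L^2(\mu)$; taking $\mu$-expectation gives $\expectat \mu {\log_A q} = -K_1(u)$, whence $u = \log_A q - \expectat \mu {\log_A q}$. Conversely, if $\log_A q \in L^2(\mu)$ for a density $q$, I set $u = \log_A q - \expectat \mu {\log_A q}$ (well defined since $L^2(\mu) \subseteq L^1(\mu)$ on a probability space), so that $\expectat \mu u = 0$ and $q = \exp_A(u + \expectat \mu {\log_A q})$; the normalization constant must then equal $-\expectat \mu {\log_A q} = K_1(u)$ by the uniqueness in Prop.~\ref{prop:Aexp}, placing $q$ in $\mathcal P(\mu)$.

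For Item~2, the point is to show, under $A'(0+) > 0$ (equivalently $\alpha_1 > 0$), that $\log_A q \in L^2(\mu)$ if and only if both $q$ and $\log q \in L^2(\mu)$. Here the two-sided bounds are decisive: on $\set{q \le 1}$, inequality \eqref{eq:bound1} gives $\frac{1}{\alpha_2}\absoluteval{\log q} \le \absoluteval{\log_A q} \le \frac{1}{\alpha_1}\absoluteval{\log q}$ (both sides negative), so $\log_A q$ and $\log q$ are square-integrable together on that set; on $\set{q \ge 1}$, inequality \eqref{eq:bound2} gives $\frac{1}{\normat\infty A}(q-1) \le \log_A q \le \frac{1}{A(1)}(q-1)$, so there $\log_A q$ is square-integrable exactly when $q$ is. Splitting $\int (\log_A q)^2\,d\mu = \int_{q\le 1} + \int_{q\ge 1}$ and matching each piece against $\log q$ and $q$ respectively yields the equivalence. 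The role of $\alpha_1 > 0$ is precisely to supply the \emph{upper} bound on $\absoluteval{\log_A q}$ in terms of $\absoluteval{\log q}$ near zero, which fails when $\alpha_1 = 0$.

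For Item~3, I would use the product formula for the chart. Writing $r_i = \log_A q_i$, the claim is that $\log_A(q_1 \otimes q_2) \in L^2(\mu_1 \otimes \mu_2)$ if and only if $r_1 \in L^2(\mu_1)$ and $r_2 \in L^2(\mu_2)$. Since $\log_A$ does not turn products into sums, I would instead route through Item~2: by Fubini, $q_1 \otimes q_2 \in L^2(\mu_1\otimes\mu_2)$ iff $q_1 \in L^2(\mu_1)$ and $q_2 \in L^2(\mu_2)$ (each $q_i$ being a density, hence of integral one), and $\log(q_1\otimes q_2) = \log q_1 \oplus \log q_2$ lies in $L^2(\mu_1\otimes\mu_2)$ iff each $\log q_i \in L^2(\mu_i)$, using that an additively separable function $f(x)+g(y)$ is square-integrable on a product of probability spaces iff each summand is (the cross term integrates to $2\expectat{\mu_1}{f}\expectat{\mu_2}{g}$, finite once the squares are). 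Combining these two equivalences with the Item~2 characterization applied on both the product space and the factors gives the result.

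The main obstacle I expect is Item~3, specifically the separable-function lemma: one must check that $\int (f(x)+g(y))^2 \, d(\mu_1\otimes\mu_2)$ being finite forces both $\int f^2\,d\mu_1$ and $\int g^2\,d\mu_2$ finite, which requires care because cancellation in the cross term could in principle mask a divergence — but since $f^2$ and $g^2$ are nonnegative and the measures are probabilities, expanding the square and integrating via Fubini shows the finite integral controls $\int f^2\,d\mu_1 + \int g^2\,d\mu_2 + 2\expectat{\mu_1}{f}\expectat{\mu_2}{g}$, and one handles the linear cross term by noting $f, g \in L^2 \subseteq L^1$. The $\log_A$-versus-$(\log, \mathrm{id})$ translation of Item~2 is what lets us avoid working with $\log_A$ of a product directly, which is where a naive approach would stall.
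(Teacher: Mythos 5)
Your Items 1 and 2 follow the paper's own proof essentially step for step: the same chart computation identifying $K_1(u)=-\expectat \mu {\log_A q}$ via $\expectat \mu u = 0$ together with the uniqueness of the normalizing constant, and the same split of $\expectat \mu {\absoluteval{\log_A q}^2}$ over $\set{q<1}$ and $\set{q\ge 1}$ using the bounds \eqref{eq:bound1} and \eqref{eq:bound2}. (Incidentally, your orientation of the two-sided bound, $\frac1{\alpha_2}\absoluteval{\log x}\le\absoluteval{\log_A x}\le\frac1{\alpha_1}\absoluteval{\log x}$ on $]0,1]$, is the correct one — check the Newton case $A(\xi)=\xi/(1+\xi)$, where $\alpha_1=1/2$, $\alpha_2=1$ — whereas the paper's displayed Eq.~\eqref{eq:bound1} has the two constants interchanged; this is immaterial to both arguments once $\alpha_1>0$, since then both constants are finite and positive. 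You should also record, as the paper does, the small bridging facts $(q-1)^2\le 1$ on $\set{q<1}$ and $\absoluteval{\log q}\le q-1$ on $\set{q\ge 1}$, which convert the piecewise statements into $q,\log q\in L^2(\mu)$ globally.)

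The genuine gap is in the converse direction of Item 3. Your separable-sum lemma is true, but your justification of it is circular at exactly the point you flagged: you ``handle the linear cross term by noting $f,g\in L^2\subseteq L^1$'', yet in the converse direction $f=\log q_1\in L^2(\mu_1)$ and $g=\log q_2\in L^2(\mu_2)$ is precisely what you are trying to prove. Moreover, the expansion $\expectat{\mu_1\otimes\mu_2}{(f+g)^2}=\expectat{\mu_1}{f^2}+\expectat{\mu_2}{g^2}+2\expectat{\mu_1}{f}\expectat{\mu_2}{g}$ cannot be asserted by Fubini before knowing that $fg$ is integrable or of constant sign: a priori $\expectat{\mu_1}{f}$ could be $-\infty$. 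The paper closes this hole with a sign observation specific to logarithms of densities: since $\log x\le x-1$ and $\expectat{\mu_i}{q_i}=1$, each $\expectat{\mu_i}{\log q_i}\le 0$ (well defined in $[-\infty,0]$ because $(\log q_i)^+\le(q_i-1)^+\in L^1(\mu_i)$), so the cross term is a product of two nonpositive quantities, hence nonnegative, and finiteness of the left-hand side then forces both $\expectat{\mu_i}{\log^2 q_i}<\infty$. Alternatively, you could repair your general lemma without any sign information by slicing: by Tonelli, for $\mu_2$-a.e.\ $y$ the section $x\mapsto f(x)+g(y)$ lies in $L^2(\mu_1)$, and choosing one such $y_0$ with $g(y_0)$ finite yields $f\in L^2(\mu_1)$ because constants are square-integrable under a probability measure. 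Either fix is a one-line insertion, but as written your converse does not go through.
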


\begin{proof}
  \begin{enumerate}
  \item 
From Eq.~\eqref{eq:Pmu}, it follows $\log_A q = u - K_1(u) \in L^2(\mu)$, provided $u\in L^{2}(\mu )$.  Conversely, let $\log _{A}q\in L^{2}(\mu )$. Eq.~\eqref{eq:Pmu} yields
\begin{equation*}
  u = \log_A q - K_1(u) \quad and\quad K_1(u)=- \log_A q.
\end{equation*}
Therefore $u = \log_A q - \expectat \mu {\log_A q}$  and $u\in L^{2}(\mu )$.

\item Write

\begin{equation*}
\absoluteval {\log_A q}^2 = \absoluteval{\log_A q}^2 (q < 1) + \absoluteval{\log_A q}^2 (q \geq 1) \,
\end{equation*}
and use the bounds of Eq.~\eqref{eq:bound1} and Eq.~\eqref{eq:bound2} to get
\begin{multline*}
  \expectat \mu {\absoluteval {\log_A q}^2} \le \frac1{\alpha_2^2}   \expectat \mu{\absoluteval{\log q}^2 (q < 1)} + 
 \frac1{A(1)^2}\expectat \mu {\absoluteval{q-1}^2 (q \geq 1)} \leq \\ \frac1{\alpha_2^2} \expectat \mu {\absoluteval{\log q}^2} + \frac1{A(1)^2} \expectat \mu {q^2} \ .
\end{multline*}
We deduce that the two conditions $q$ and $\log q$ in $L^{2}(\mu )$ imply $\log _{A}q\in L^{2}(\mu )$.  

Conversely, let $\log _{A}q\in L^{2}(\mu )$. By means of the other two bounds (recall that $\alpha_1 > 0$) we have too
\begin{equation*}
  \expectat \mu {\absoluteval {\log_A q}^2} \ge \frac1{\alpha_1^2} \expectat \mu {\absoluteval{\log q}^2(q<1)} + \frac1{\left\Vert A\right\Vert_{\infty}^2}\expectat \mu {(q-1)^2(q \ge 1)}\ .
\end{equation*}
Consequently, $\expectat \mu {(q-1)^2(q \ge 1)} < + \infty$. This in turn gives $\expectat \mu {(q-1)^2} < + \infty$, and so $q \in L^{2}(\mu )$.

Once again, the previous inequality provides the condition $\expectat \mu {\absoluteval{\log q}^2(q<1)} < +\infty$. On the other hand, $\expectat \mu {\absoluteval{\log q}^2(q \ge 1)} < +\infty$ since $\absoluteval{\log q}^2(q \ge 1) \le (q-1)^2 (q \ge 1)$. Therefore, $\log q \in L^{2}(\mu )$.   
\item We deduce by the previous item that: $q_1 \otimes q_2 \in \mathcal P(\mu_1 \otimes \mu_2)$ if and only if both $q_1 \otimes q_2$ and $\log (q_1 \otimes q_2)$ are in $L^2(\mu_1\otimes\mu_2)$.

The first condition is equivalent to both $q_1 \in L^2(\mu_1)$ and $q_2 \in L^2(\mu_2)$. The second one is equivalent to $\log q_1 + \log q_2 \in L^2(\mu_1\otimes\mu_2)$. On the other hand, we have
\begin{multline} \label{eq:SSS}
  \expectat {\mu_1 \otimes \mu_2}{(\log q_1 + \log q_2)^2} = \\ \expectat {\mu_1}{\log^2 q_1} + \expectat {\mu_2}{\log^2 q_2} + 2 \  {\expectat {\mu_1} {\log q_1}} {\expectat {\mu_2} {\log q_2}}.
\end{multline}
By Eq.~\eqref{eq:SSS},  $q_1 \in \mathcal P(\mu_1)$ and $q_2 \in \mathcal P(\mu_2)$ imply   
$q_1 \otimes q_2 \in \mathcal P(\mu_1 \otimes \mu_2)$.

Conversely, assume $q_1 \otimes q_2 \in \mathcal P(\mu_1 \otimes \mu_2)$. This implies that it holds,

$\expectat {\mu_1 \otimes \mu_2}{(\log q_1 + \log q_2)^2} < + \infty$. 
Since $\expectat {\mu_i} {\log q_i} \le \expectat {\mu_1} {q_i-1} = 0$. We have $\  {\expectat {\mu_1} {\log q_1}} {\expectat {\mu_2} {\log q_2}} \geq 0$. In view of Eq.~\eqref{eq:SSS}, we can infer that $q_1 \in \mathcal P(\mu_1)$ and $q_2 \in \mathcal P(\mu_2)$ \qed  
\end{enumerate}
\end{proof}

We proceed now to define an Hilbert bundle with base $\mathcal{P}(\mu )$.
The notion of Hilbert bundle has been introduced in Information Geometry by 
\cite{amari:87dual}. We are here using an adaptation to the $A$-exponential of
arguments elaborated by \cite{gibilisco|pistone:98,pistone:2013GSI}. Notice that the construction depends in a essential way on the specific conditions we are assuming for the present class of deformed exponential.

At each $q \in \mathcal{P}(\mu )$ the escort density $\widetilde q$ is bounded, so that we can define the fiber given by the Hilbert spaces 
\begin{equation*}
\mathcal {H}_{q}=\left\{ u\in L^{2}(\mu )|{\Expectation}_{\widetilde{q}}\left[ u\right] =0\right\}
\end{equation*}
with scalar product $\left\langle u,v\right\rangle _{q}=\int uv\ d\mu $. The Hilbert bundle is
\begin{equation*}
H\mathcal{P}(\mu )=\left\{ (q,u)|q\in \mathcal{P}(\mu ),u\in \mathcal {H}_{q}\right\} \
.
\end{equation*}
For each $p,q\in \mathcal{P}(\mu )$ the mapping $\mathbb{U}_{p}^{q}u=u-{\Expectation}_{\widetilde{q}}\left[ u\right] $ is a continuous linear mapping from $\mathcal {H}_{p}$
to $\mathcal {H}_{q}$. Moreover, $\mathbb{U}_{q}^{r}\mathbb{U}_{p}^{q}=\mathbb{U}_{p}^{r}$. In particular, $\mathbb{U}_{q}^{p}\mathbb{U}_{p}^{q}$ is the identity on $\mathcal {H}_{p}$ and so $\mathbb{U}_{p}^{q}$ is an isomorphism of $\mathcal {H}_{p}$ onto $\mathcal {H}_{q}$.

In the next proposition an affine atlas of charts is constructed in order to define our Hilbert bundle which is an expression of the tangent
bundle. The velocity of a curve $t \mapsto p(t) \in \mathcal{P}(\mu)$ is
given in the Hilbert bundle by the so called $A$-score that, in our
case, takes the form $A(p(t))^{-1} \dot p(t)$, where $\dot p(t)$ is computed in $L^1(\mu)$.

The following proposition is taken from \cite{montrucchio|pistone:2017} where a detailed proof is presented.

\begin{proposition}
\begin{enumerate}
\item Fix $p\in \mathcal{P}(\mu )$. A positive density $q \in \mathcal{P}(\mu )$ if and only if
\begin{equation*}
q=\exp _{A}(u-K_{p}(u)+\log _{A}p),\text{ with $u\in L^{2}(\mu )$ and ${\Expectation}_{\widetilde{p}}\left[ u\right] =0$.}
\end{equation*}

\item For any fixed $p\in \mathcal{P}(\mu )$ the mapping $s_p \colon \mathcal{P}(\mu )\rightarrow \mathcal {H}_{p}$ defined by  
\begin{equation*}
q\mapsto \log _{A}q-\log _{A}p+D_{A}(p\Vert
q)
\end{equation*}
is injective and surjective, with inverse $e_{p}(u)=\exp
_{A}(u-K_{p}(u)+\log _{A}p)$.

\item The atlas $\left\{ s_{p}|p\in \mathcal{P}(\mu )\right\} $ is
affine with transitions 
\begin{equation*}
s_{q}\circ e_{p}(u)=\mathbb{U}_{p}^{q}u+s_{p}(q)\ .
\end{equation*}

\item The velocity of the differentiable curve $t\mapsto p(t)\in \mathcal{P}(\mu )$ in the chart $s_{p}$ is $ds_{p}(p(t))/dt\in \mathcal {H}_{p}$. Conversely, given any $u\in \mathcal {H}_{p}$, the curve 
\begin{equation*}
p\colon t\mapsto \exp _{A}(tu-K_{p}(tu)+\log _{A}p)
\end{equation*}
satisfies $p(0)=p$ and has velocity $u$ at $t=0$, expressed in the chart $s_{p}$. If the velocity of a curve is $t\mapsto 
\dot{u}(t)$, in a chart $s_{p}$,   then $\mathbb{U}_{p}^{q}
\dot{u}(t)$ is its velocity in the chart $s_{q}$. 

\item If $t\mapsto p(t)\in \mathcal{P}(\mu )$ is differentiable with
respect to the atlas then it is differentiable as a mapping in $L^{1}(\mu )$. It follows that the $A$-score is well-defined and is the expression of the
velocity of the curve $t\mapsto p(t)$ in the moving chart $t\mapsto s_{p(t)}$.
\end{enumerate}
\end{proposition}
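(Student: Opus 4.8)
The plan is to prove the five items in sequence, leaning heavily on the results already established for the functional $K_p$. The core object is the chart $s_p(q) = \log_A q - \log_A p + D_A(p\Vert q)$ and its announced inverse $e_p(u) = \exp_A(u - K_p(u) + \log_A p)$, so the first task is to show these are mutually inverse bijections between $\mathcal P(\mu)$ and $\mathcal H_p$.

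For Item 1, I would start from the intrinsic characterization $\mathcal P(\mu) = \{q : \log_A q \in L^2(\mu)\}$ proved in the previous proposition, and verify that the representation $q = \exp_A(u - K_p(u) + \log_A p)$ with $u \in L^2(\mu)$, $\Expectation_{\widetilde p}[u]=0$ is available. By Prop.~\ref{prop:Aexp}.\ref{item:Aexp4} such a representation exists whenever $\log_A q - \log_A p \in L^1(\mu)$; the work is upgrading integrability from $L^1$ to $L^2$ and pinning down the centering. Since both $\log_A q$ and $\log_A p$ lie in $L^2(\mu)$ for densities in $\mathcal P(\mu)$, their difference is in $L^2(\mu)$, and then $u = \log_A q - \log_A p + K_p(u)$ differs from an $L^2$ function by a constant, hence is itself in $L^2(\mu)$; the normalization $\Expectation_{\widetilde p}[u]=0$ fixes the constant uniquely by Prop.~\ref{prop:Aexp} Items 5--6.

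For Item 2, injectivity and surjectivity of $s_p$ follow by exhibiting $e_p$ as a two-sided inverse. Here I would use Eq.~\eqref{eq:QMP}, namely $K_p(u_q) = D_A(p\Vert q)$, to recognize that $s_p(q) = \log_A q - \log_A p + D_A(p\Vert q)$ is exactly the centered representative $u_q$; composing $s_p$ with $e_p$ then reduces to the defining identity of $K_p$. For Item 3, the affine transition $s_q \circ e_p(u) = \mathbb U_p^q u + s_p(q)$ should fall out by writing both $s_q(q')$ and $s_p(q')$ in terms of $\log_A q'$ and subtracting; the constant offset $s_p(q)$ and the centering map $\mathbb U_p^q v = v - \Expectation_{\widetilde q}[v]$ appear naturally because changing the base point only changes which escort density is used for centering. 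This is largely bookkeeping given the linearity of $\mathbb U_p^q$ already recorded before the proposition.

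The genuinely analytic part is Items 4 and 5, linking the atlas-differentiability of a curve to its $L^1(\mu)$-differentiability and to the $A$-score $A(p(t))^{-1}\dot p(t)$. \textbf{The hard part will be Item 5}: showing that differentiability in the moving chart $t \mapsto s_{p(t)}$ implies genuine $L^1(\mu)$-differentiability of $t \mapsto p(t)$, and that the pointwise $A$-score is the correct velocity. I expect to obtain the velocity in a fixed chart $s_p$ from Item 4 via the formula $p(t) = \exp_A(u(t) - K_p(u(t)) + \log_A p)$, differentiate using Prop.~\ref{prop:FAZ}.1 (the Fréchet derivative $d\exp_A(v)[h] = A(\exp_A(v+\log_A p))h$ into $L^1(\mu)$) together with the Fréchet differentiability of $K_p$ from Prop.~\ref{prop:FAZ}.2, and then read off $\dot p(t) = A(p(t))(\dot u(t) - dK_p(u(t))[\dot u(t)])$. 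Dividing by $A(p(t))$ recovers $\dot u(t)$ up to the centering term, which is precisely the chart velocity in $\mathcal H_p$; the consistency across charts is guaranteed by the transition formula of Item 3, so that the score transforms by $\mathbb U_p^q$ as claimed. The main obstacle is controlling the passage between the $L^2$-valued chart derivative and the $L^1$-valued density derivative uniformly enough to justify differentiation under the integral, which is exactly what the linear bound \eqref{eq:lip} and the regularity established in Prop.~\ref{prop:FAZ} are designed to supply.
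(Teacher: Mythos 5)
Your plan follows what is, in effect, the intended argument. Note first that this paper does not actually prove the proposition: it states explicitly that the detailed proof is in the conference paper \cite{montrucchio|pistone:2017}, so the comparison is against the argument the paper's machinery is designed to support. On that score your reconstruction is right where it can be checked: Item 1 from the characterization $\mathcal P(\mu)=\setof{q}{\log_A q\in L^2(\mu)}$ together with Prop.~\ref{prop:Aexp} (existence of $K_p$, uniqueness of $u$ up to additive constants, translation invariance $K_p(u+c)=K_p(u)+c$ since $K_p(1)=1$); Item 2 by identifying $s_p(q)$ with the $\widetilde p$-centered representative $u_q$ via Eq.~\eqref{eq:QMP}; Items 4--5 by the chain rule through Prop.~\ref{prop:FAZ} with $\alpha=2$, giving $\dot p(t)=A(p(t))\bigl(\dot u(t)-\Expectation_{\widetilde{p}(t)}[\dot u(t)]\bigr)$ in $L^1(\mu)$, whence the $A$-score $A(p(t))^{-1}\dot p(t)=\mathbb{U}_p^{p(t)}\dot u(t)$ is exactly the velocity in the moving chart. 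These steps are correct and use precisely the regularity results the paper establishes for this purpose.

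The one place where your ``bookkeeping'' claim would not survive being carried out is Item 3. Writing $q'=e_p(u)$, so that $\log_A q'-\log_A q=u+(\log_A p-\log_A q)-K_p(u)$, and centering with respect to $\widetilde q$ (an operation that annihilates constants) gives
\begin{equation*}
s_q\circ e_p(u)=\mathbb{U}_p^q u+\Bigl((\log_A p-\log_A q)-\Expectation_{\widetilde q}\left[\log_A p-\log_A q\right]\Bigr)=\mathbb{U}_p^q u+s_q(p)\ ,
\end{equation*}
with constant term $s_q(p)$, not $s_p(q)$ as printed in the proposition and as you endorse. A type check already shows the printed formula cannot be right in general: $s_q\circ e_p(u)$ and $\mathbb{U}_p^q u$ lie in $\mathcal H_q$, while $s_p(q)\in\mathcal H_p$, and these fibers are different subspaces of $L^2(\mu)$ unless $\widetilde p=\widetilde q$. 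So the statement contains a typo ($s_p(q)$ for $s_q(p)$), and your proposal, by asserting that the offset $s_p(q)$ ``appears naturally'' without performing the computation, would reproduce the typo rather than the correct affine transition. This does not damage your Items 4--5, since only the linear part $\mathbb{U}_p^q$ of the transition enters the transformation rule for velocities, but in a finished write-up the computation above must be done explicitly and the constant corrected.
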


We end here our discussion of the geometry of the Hilbert bundle, because our aim is limited to show the applicability of the analytic results obtained in the previous section. A detailed discussion of the relevant geometric objects e.g., the affine covariant derivative, is not attempted here.

\section{Final remarks}
\label{sec:conclusions}
A non-parametric Hilbert manifold based on a deformed exponential representation of positive densities has been firstly introduced by N.J.~Newton \cite{newton:2012,newton:2016}. We have derived regularity properties of the normalizing functional $K_p$ and discussed the relevant Fenchel conjugation. In particular, we have discussed some properties of the escort mapping a form of the divergence that appears to be especially adapted to our set-up. We have taken a path different from that of N.J.~Newton original presentation. We allow for a manifold defined by an atlas containing charts centered at each density in the model. In conclusion, we have discussed explicitly a version of the Hilbert bundle as a family of codimension 1 sub-vector spaces of the basic Hilbert space.

\begin{acknowledgement}
The Authors wish to thank the anonymous referees whose comments have led to a considerable improvement of the paper. L. Montrucchio is Honorary Fellow of the Collegio Carlo Alberto Foundation. G. Pistone is a member of GNAMPA-INdAM and acknowledges the support of de Castro Statistics and Collegio Carlo Alberto.
\end{acknowledgement}

% \bibliographystyle{spmpsci}
% \bibliography{tutto}

\end{document}